\newcommand{\kibitz}[2]{\ifnum\Comments=1\textcolor{#1}{#2}\fi}
\theoremstyle{plain}
\newtheorem{thm}{Theorem}[section]
\newtheorem{lemma}{Lemma}[section]
\newtheorem{prop}[lemma]{Proposition}
\theoremstyle{definition}
\theoremstyle{remark}
\newtheorem{remark}[lemma]{Remark}
\numberwithin{equation}{section}
\def\esmath{\ensuremath}
\def\phv{\ensuremath\varphi}
\def\RR{\esmath\mathbb R} %real numbers
\newcommand{\p}{\partial}
\newcommand{\bn}{\begin{enumerate}}
\newcommand{\en}{\end{enumerate}}
\newcommand{\bi}{\begin{itemize}}
\newcommand{\ei}{\end{itemize}}
\newcommand{\bqq}{\begin{eqnarray*}}
\newcommand{\eqq}{\end{eqnarray*}}
\newcommand{\balg}{\begin{align*}}
\newcommand{\ealg}{\end{align*}}
\newcommand{\limt}[2]{\lim\limits_{{#1}\to{#2}}}
\DeclareMathOperator{\Rm}{Rm}
\begin{document}
%%%%%%%%%%%%%%%%%
\title[Noncompact MCF with Type-II curvature blow-up]{Mean curvature flow of noncompact hypersurfaces with Type-II curvature blow-up}

\author{James Isenberg}
\address{Department of Mathematics, University of Oregon, Eugene, OR 97403}
\email{isenberg@uoregon.edu}

\author{Haotian Wu}
\address{School of Mathematics and Statistics, The University of Sydney, NSW 2006, Australia}
\email{haotian.wu@sydney.edu.au}

\thanks{JI is partially supported by NSF grant PHY-1306441.}

%\date{\usdate\today} 

\keywords{Mean curvature flow; noncompact hypersurfaces; Type-II curvature blow-up; precise asymptotics.}

\subjclass[2010]{53C44 (primary), 35K59 (secondary)}

%%%%%%%%%%%%%%%%%
%\linenumbers
%%%%%%%%%%%%%%%%%

\begin{abstract}
We study the phenomenon of Type-II curvature blow-up in mean curvature flows of rotationally symmetric noncompact embedded hypersurfaces. Using analytic techniques based on formal matched asymptotics and the construction of upper and lower barrier solutions enveloping formal solutions with prescribed behavior, we show that for each initial hypersurface considered, a mean curvature flow solution exhibits the following behavior near the ``vanishing'' time $T$: (1) The highest curvature concentrates at the tip of the hypersurface (an umbilic point), and for each choice of the parameter $\gamma>1/2$, there is a solution with the highest curvature blowing up at the rate $(T-t)^{-(\gamma +1/2)}$. (2) In a neighborhood of the tip, the solution converges to a translating soliton which is a higher-dimensional analogue of the ``Grim Reaper'' solution for the curve-shortening flow. (3) Away from the tip, the flow surface approaches a collapsing cylinder at a characteristic rate dependent on the parameter $\gamma$.
\end{abstract}

%% \tableofcontents %% Just for papers exceeding 50 pages.

\maketitle

\section{Introduction}\label{sec:intro}

Let $\phv(t): M^n\to\mathbb{R}^{n+1}$, $t_0<t<t_1$, be a smooth one-parameter family of embeddings (or more generally, immersions) of $n$-dimensional hypersurfaces in $\mathbb{R}^{n+1}$. Mean curvature flow (MCF) evolves the hypersurface $M^n$ in the direction of its mean curvature vector $\vec{H}$ according to the following prescription:
\begin{align}
\label{eq:mcf}
 \p_t \phv(p,t) & = \vec{H}, \quad\quad p\in M^n,\; t_0<t<t_1. 
\end{align}
MCF is a well-studied geometric evolution equation with applications in, for example, material sciences and image processing. It is the negative gradient flow of the volume functional. MCF can be studied, as seen in Brakke's work
\cite{Br78}, in the context of geometric measure theory. It can also be analyzed  from the perspective of partial differential equations (PDEs), as shown initially by Huisken in  \cite{Hui84}.

The mean curvature flow equation \eqref{eq:mcf}, if written out in terms of the components of the evolving mapping $\varphi$, is a (weakly) parabolic system of PDEs. Under this flow, the curvatures of the induced metric satisfy nonlinear reaction-diffusion type equations. Thus, during relatively short time intervals around the time of the initial immersion, MCF tends to smooth out irregularities in the geometry; over a longer time scale, singularities can form. Indeed, finite-time singularities in MCF are known to occur for a wide variety of families of initial data. For example, any closed convex hypersurface remains convex under MCF and shrinks to a ``round point'' in finite time \cite{Hui84}. Within the realm of noncompact immersed hypersurfaces, the mean curvature flow of hypersurfaces that are initially sufficiently close to a cylinder are also known to  become singular in finite time \cite{GS09}.

Singularities which form during the course of mean curvature flow are classified according to the rate of blow-up of the induced second fundamental form of the immersed hypersurface. Specifically, if we let $M_t := \phv(t)(M^n)$ and denote by  $h(p,t)$ the second fundamental form of $M_t$ at $p$, then if $M_t$ evolves by MCF and becomes singular at time $t=T<\infty$, this finite-time singularity is called \emph{Type-I} if
\begin{align*}
 \sup\limits_{p\in M_t}|h(p,t)|(T-t)^{1/2} \leq C
\end{align*}
for some $C<\infty$, and it is called \emph{Type-II} if $\sup\limits_{p\in M_t}|h(p,t)|$ blows up at a faster rate. 

Examples of  the formation of Type-I singularities under MCF are easy to obtain: the mean curvature flows of the standard spheres provide simple examples of Type-I singularities for compact embeddings, while the MCF of the standard cylinders provide such examples for noncompact embeddings. 

Mean curvature flows which develop Type-II singularities are more difficult to specify. One way to do so is to consider a one-parameter family of initial embeddings of the 2-sphere in $\mathbb{R}^3$, with the parameter controlling the extent to which the equator is tightly cinched. For very loose cinching, the flow converges to the shrinking round sphere with its usual global Type I singularity.  For very tight cinching, it has been shown (see \cite{Hui90} and \cite{Ang92} for the case of rotationally symmetric embeddings, and see \cite{GKS, GK} for embeddings which are nearly rotationally symmetric) that the equator shrinks more rapidly than the two ``dumbbell'' hemispheres,  and forms a Type-I ``neckpinch''. To obtain a mean curvature flow which develops a Type-II singularity, one starts the flow at the embedding with the parameter value at the threshold between those embeddings flowing to Type-I neckpinches and those flowing to Type I sphere collapses. The detailed  asymptotics of these Type-II singularities, which develop at the poles of the embedded spheres and have been labeled ``degenerate neckpinches'', have been studied (in the rotationally symmetric case) by Angenent and Vel\'{a}zquez in \cite{AV97}. Rotationally symmetric compact MCF solutions which develop Type-II singularities have been constructed in \cite{AAG}, using the level-set flows of Evans \& Spruck \cite{ES1,ES2,ES3,ES4}, and Chen, Giga \& Goto \cite{CGG}.

In this paper, we study the behavior of Type-II curvature blow-up in mean curvature flows of \emph{noncompact} embedded hypersurfaces.  Part of our motivation for this study comes from the differences which have been observed  between the Type-II singularities which develop in \emph{Ricci flow} on noncompact manifolds and those seen on compact manifolds. A key difference seen in the examples studied thus far concerns the rate of curvature blowup. On compact manifolds $\Sigma$, all the examples that  have been found \cite{AIK11, AIK12} have ``quantized''  blowup rates of $\sup\limits_{x\in \Sigma}|\Rm(x,t)| \sim  (T-t)^{\frac{2}{k}-2}$ for integers $k\ge3$ (here $T$ is the time of the first singularity). By contrast, for noncompact manifolds $\Sigma$, the known examples \cite{Wu14} have a continuous spectrum of blowup rates:  $\sup\limits_{x\in \Sigma}|\Rm(x,t)| \sim (T-t)^{-\lambda-1}$ for all $\lambda \ge 1.$ Noting that the Angenent-Vel\'{a}zquez examples of Type-II singularities in MCF for compact hypersurfaces have quantized rates of blowup \cite{AV97}, we are led to ask what happens for mean curvature flow of noncompact hypersurfaces: Do Type-II singularities exist? If so, what can be said about these singularities?

To motivate how we build mean curvature flows of noncompact hypersurfaces  which exhibit Type-II curvature blow-up, we consider the following setup (see Figure 1):  Suppose we have a graph over an $n$-ball that  asymptotically approaches  the cylinder $S^n\times\mathbb{R}$. If we evolve both the graph and the cylinder via MCF, then both surfaces will shrink:  the cylinder shrinks to a line in finite time, and the evolving graph will move to the right and remain asymptotic to the shrinking cylinder. It follows from the work of S\'{a}ez and Schn\"{u}rer \cite{SS14} that the evolving graph disappears to spatial infinity at the same time as the cylinder collapses. We label this finite time of the graph's disappearance the "vanishing time". Near the vanishing time, the left-most point on the graph must  travel arbitrarily large distances in arbitrarily small amounts of time. Since the evolving graph moves at a speed determined  by its mean curvature, it is plausible that the curvature at the left-most point must blow up ``very fast''. Much of this scenario is confirmed in S\'{a}ez and Schn\"{u}rer's work. However, as noted in Open Problem 1 of \cite{SS14}, the nature of the singular phenomenon at this left-most point, and the behavior of the flow in a neighborhood of the developing singularity, are not resolved in that work. Furthermore, S\'{a}ez and Schn\"{u}rer ask for ``optimal a priori estimates'' in Open Problem 3 of their paper \cite{SS14}. In this paper, we show that mean curvature flows of this sort develop Type-II singular behavior, we show that the allowed rates of decay are not quantized in the sense described above, and we describe the asymptotic behavior of the flows near these singularities.

\begin{figure} \centering
\begingroup%
  \makeatletter%
  \providecommand\color[2][]{%
    \errmessage{(Inkscape) Color is used for the text in Inkscape, but the package 'color.sty' is not loaded}%
    \renewcommand\color[2][]{}%
  }%
  \providecommand\transparent[1]{%
    \errmessage{(Inkscape) Transparency is used (non-zero) for the text in Inkscape, but the package 'transparent.sty' is not loaded}%
    \renewcommand\transparent[1]{}%
  }%
  \providecommand\rotatebox[2]{#2}%
  \ifx\svgwidth\undefined%
    \setlength{\unitlength}{320pt}%
    \ifx\svgscale\undefined%
      \relax%
    \else%
      \setlength{\unitlength}{\unitlength * \real{\svgscale}}%
    \fi%
  \else%
    \setlength{\unitlength}{\svgwidth}%
  \fi%
  \global\let\svgwidth\undefined%
  \global\let\svgscale\undefined%
  \makeatother%
  \begin{picture}(1,0.30695043)%
    \put(0,0){\includegraphics[width=\unitlength,page=1]{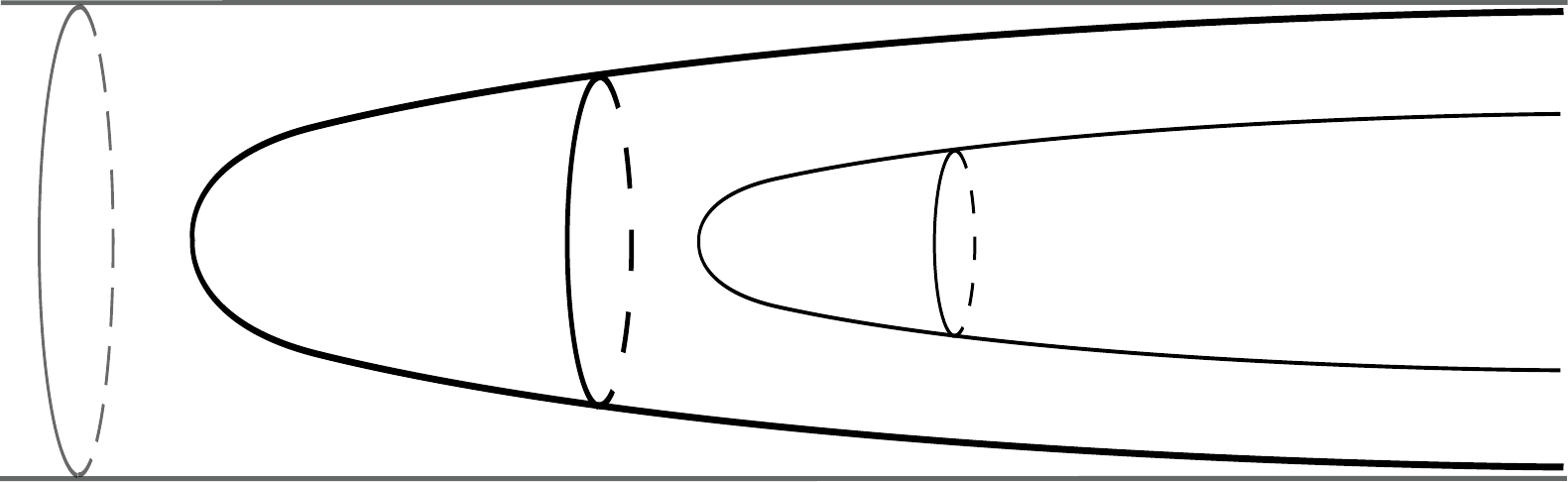}}%
  \end{picture}%
\endgroup%
\caption{}
\end{figure}

To construct these mean curvature flows explicitly, we restrict ourselves to the class of complete hypersurfaces that are rotationally symmetric, (strictly) convex\footnote{Throughout this paper, ``convex'' means ``strictly convex''.}, smooth graphs over a ball and asymptotic to a cylinder. One readily verifies that embeddings with these properties are preserved by MCF (see for example \cite{SS14}). We now introduce the following notation. For any point $(x_0,x_1,\ldots,x_n)\in\mathbb{R}^{n+1}$, we write
\begin{align*}
 x = x_0, \quad r = \sqrt{x_1^2 + \cdots + x_n^2}.
\end{align*}
A noncompact hypersurface $\Gamma$ is said to be rotationally symmetric if
\begin{align*}
 \Gamma = \left\{(x_0,x_1,\ldots,x_n) : r = u(x_0), a\leq x_0 < \infty \right\}.
\end{align*}
We assume that $u$ is strictly concave so that the hypersurface $\Gamma$ is convex and that $u$ is strictly increasing with $u(a)=0$ and with $\lim\limits_{x_0\nearrow \infty}u(x_0) = r_0$, where $r_0$ is the radius of the cylinder. The function $u$ is assumed to be smooth, except at $x=a$. We note that this particular non-smoothness of $u$ is a consequence of the choice of the (cylindrical-type) coordinates; in fact, as seen below, 
if the time-dependent flow function $u(x,t)$ is inverted in a particular way, this irregularity is removed. 
We label the point where $u=0$ the \emph{tip} of the surface.

We now denote by $\Gamma_t$ the solution to MCF which starts at a specified choice of the initial embedding $\Gamma$ (as described above). If we represent $\Gamma_t$ by a graph $r=u(x,t)$, then the function $u$ satisfies the PDE
\begin{align}\label{eq:u(x,t)}
 u_t & = \frac{u_{xx}}{1+u^2_x} - \frac{n-1}{u}.
\end{align}
We know from \cite{SS14} that until the vanishing time $T$, the surface $\Gamma_t$ is asymptotic to the evolving cylinder;  and it follows from Theorem 9.1 and Remark 9.9 (ii) in \cite{SS14} that $\Gamma_t$ races off to $x\nearrow \infty$ at arbitrarily large speed as the cylinder collapses (when $t\nearrow T$).  To determine the details  of this behavior, we analyze solutions of equation \eqref{eq:u(x,t)}.

To help carry out this analysis, especially in a neighborhood of the tip, it is useful to  define the following rescaled quantities 
\begin{align*}
 \tau = -\log(T-t), \quad y = x(T-t)^{\gamma-1/2}, \quad \phi(y,\tau) = u(x,t)(T-t)^{-1/2},
\end{align*}
where $\gamma$ is a parameter to be specified. Substituting these quantities into \eqref{eq:u(x,t)}, we obtain the following PDE for $\phi(y,\tau)$:
\begin{align}\label{eq:phi(y,tau)}
 \left. \p_\tau \right\vert_y \phi & = \frac{e^{-2\gamma\tau} \phi_{yy}}{1+e^{-2\gamma\tau}\phi^2_y} - (1/2-\gamma) y \phi_y
- \frac{(n-1)}{\phi} + \frac{\phi}{2},
\end{align}
where $\left. \p_\tau \right\vert_y $ means taking the partial derivative in $\tau$ while keeping $y$ fixed. We readily note that equation \eqref{eq:phi(y,tau)} admits the constant solution $\phi \equiv \sqrt{2(n-1)}$, which  corresponds to the collapsing cylinder (which is a soliton).

To study more general solutions, it is useful to invert the coordinates and work with 
\begin{align*}
 y( \phi,\tau) & = y\left( \phi(y,\tau), \tau \right);
\end{align*}
this inversion can be done because the hypersurface under consideration is a convex graph over a ball. In terms of $y(\phi,\tau)$, the equation corresponding to mean curvature flow (equivalent to equation \eqref{eq:phi(y,tau)} and hence equivalent to equation \eqref{eq:u(x,t)}) is the following:
\begin{align}\label{eq:y(phi,tau)}
\left. \p_\tau \right\vert_{\phi} y & = \frac{y_{\phi\phi}}{1 + e^{2\gamma\tau} y^2_\phi} + \left(\frac{(n-1)}{\phi} - \frac{\phi}{2} \right) y_\phi + (1/2-\gamma) y.
\end{align}

Our objective in this work is to construct noncompact MCF solutions that exhibit Type-II behavior in finite time and obtain their precise asymptotics. Our main result is the following: (Here we use the notation ``$A \sim B$'' to indicate that there exist positive constants $c$ and $C$ such that $cB \leq A \leq CB$.)

\begin{thm}\label{thmmain}

For any choice of an integer $n\geq 2$ and a pair of real numbers $\gamma > 1/2$, and $\tilde A>0$, there is an open set $\mathscr{G}$ of $n$-dimensional, smooth, complete noncompact, rotationally symmetric, strictly convex hypersurfaces in $\mathbb{R}^{n+1}$ such that the MCF evolution $\Gamma_t$ starting at each hypersurface $\Gamma\in\mathscr{G}$ is trapped in a shrinking cylinder,  escapes at spatial infinity while the cylinder becomes singular at $T<\infty$, and  has the following precise asymptotic properties near the vanishing time $T$ of $\Gamma_t$:
 \bn
 \item The highest curvature occurs at the tip of the hypersurface $\Gamma_t$ , and it blows up at the precise Type-II rate
 \begin{align}
 \sup\limits_{p\in M_t}|h(p,t)| & \sim (T-t)^{-(\gamma+1/2)} \quad \text{as } t\nearrow T.
 \end{align}
  \item Near the tip, the Type-II blow-up of $\Gamma_t$ converges to a translating soliton which is a higher-dimensional analogue of the ``Grim Reaper''
   \begin{align}
 y(e^{-\gamma\tau} z,\tau) & = y(0,\tau) + e^{-2\gamma\tau}\frac{1}{(\gamma-1/2)\tilde A} \tilde P\left( (\gamma-1/2)\tilde A z \right) \quad \text{as } \tau\nearrow \infty
\end{align}
uniformly on compact $z$ intervals, where $z=\phi e^{\gamma\tau}$, and $\tilde P$ is defined below in equation \eqref{eq:tildeP}.
 \item Away from the tip, but near spatial infinity, the Type-I blow-up of $\Gamma_t$  approaches the cylinder at the rate 
 \begin{align}
  2(n-1)-\phi^2 & \sim y^{\frac{1}{1/2-\gamma}} \quad \text{as } y\nearrow \infty.
 \end{align}
 \en
In particular, the solution constructed has the asymptotics predicted by the formal solution described in Section \ref{formal}.

\end{thm}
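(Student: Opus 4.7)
The plan is to adapt the Angenent--Vel\'azquez formal matched asymptotics method, developed for compact MCF degenerate neckpinches, to the noncompact setting outlined in the introduction. First I would produce, in Section \ref{formal}, a \emph{formal} solution of \eqref{eq:y(phi,tau)} by decomposing the hypersurface into three regions with distinct scaling regimes: (i) a \emph{tip region} of size $e^{-\gamma\tau}$ in $\phi$ around $\phi=0$, where the correct ansatz is $y(\phi,\tau)=y(0,\tau)+e^{-2\gamma\tau}\tilde A^{-1}(\gamma-1/2)^{-1}\tilde P((\gamma-1/2)\tilde A\phi e^{\gamma\tau})$ with $\tilde P$ the Grim Reaper profile solving the translating soliton ODE; (ii) a \emph{parabolic/intermediate region} on bounded $y$-scales, where one linearizes around the cylinder $\phi\equiv\sqrt{2(n-1)}$, drops the $e^{-2\gamma\tau}y_\phi^2$ term, and finds that the linearized operator is a Hermite-type operator whose eigenmodes dictate the $\gamma$-dependence; (iii) an \emph{outer/cylindrical region} for large $y$, where balancing in \eqref{eq:y(phi,tau)} yields $2(n-1)-\phi^2\sim y^{1/(1/2-\gamma)}$. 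The matching conditions across these three regions fix all free constants in terms of $\gamma$ and $\tilde A$, and simultaneously predict the blow-up rate $(T-t)^{-(\gamma+1/2)}$ for $|h|$.

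Next I would upgrade the formal solution to a rigorous solution by a \emph{barrier construction}. The aim is to produce a one-parameter family of upper barriers $y^+(\phi,\tau)$ and lower barriers $y^-(\phi,\tau)$ for the quasilinear equation \eqref{eq:y(phi,tau)}, each of the form ``formal solution $\pm$ controlled error,'' such that $\partial_\tau y^+ \ge \cR[y^+]$ and $\partial_\tau y^- \le \cR[y^-]$ (where $\cR$ denotes the right-hand side of \eqref{eq:y(phi,tau)}) in each of the three regions, with the errors chosen so that they are dominated by the linearized operator's leading term. The barriers should agree to leading order in each region and match across the tip/intermediate/outer interfaces, which is the standard but delicate step: one typically introduces cutoff functions in overlap strips and absorbs commutator terms by enlarging the barrier error by appropriate subleading powers of $e^{-\tau}$.

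The open set $\mathscr{G}$ is then defined as the set of initial hypersurfaces whose graph function (in inverted coordinates) lies pointwise between $y^-(\cdot,\tau_0)$ and $y^+(\cdot,\tau_0)$ for some sufficiently negative $\tau_0$. By the maximum/comparison principle for \eqref{eq:y(phi,tau)}, which holds because the equation is uniformly parabolic on compact subsets away from the tip and is handled at the tip by working in the regularizing $\phi$-variable (the inversion removes the apparent singularity there), any MCF solution starting in $\mathscr{G}$ remains trapped between $y^-$ and $y^+$ for all $\tau\in[\tau_0,\infty)$. Since $y^\pm$ both converge, in the sense described in the three items of the theorem, to the formal solution with the prescribed asymptotics, the trapped solution inherits those asymptotics: the Grim-Reaper convergence on tip scales $z=\phi e^{\gamma\tau}$, the rate $(T-t)^{-(\gamma+1/2)}$ at the umbilic tip (computed directly from $\tilde P''(0)$), and the outer decay $2(n-1)-\phi^2\sim y^{1/(1/2-\gamma)}$. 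Openness of $\mathscr{G}$ follows from the strict inequalities in the barrier inequalities at $\tau=\tau_0$.

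The principal obstacle I expect is the barrier construction across the tip/intermediate matching region: one must simultaneously (a) choose the subleading correction to the Grim Reaper profile $\tilde P$ so that the translation speed $\dot y(0,\tau)$ predicted there agrees with that determined by the outer cylindrical asymptotics, (b) control the nonlinear quasilinear term $e^{2\gamma\tau}y_\phi^2$ in \eqref{eq:y(phi,tau)}, which becomes large on tip scales and must be kept in check by the concavity built into $\tilde P$, and (c) close the comparison argument near the tip despite the degeneracy of the coordinate $\phi$ at $\phi=0$. A secondary difficulty is that the hypersurface is \emph{noncompact}: the standard maximum principle needs an asymptotic boundary condition at $y\to\infty$, which is supplied by S\'aez--Schn\"urer's result that $\Gamma_t$ is asymptotic to the shrinking cylinder, and by choosing $y^\pm$ to agree at infinity with the outer cylindrical barrier of Section \ref{formal}.
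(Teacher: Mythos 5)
Your overall barrier strategy (formal solution, sub/supersolutions, trapping, comparison) is the right one and mirrors the paper's, but your regional decomposition contains a structural error that the paper deliberately avoids. You propose three regions, and in the middle ``parabolic/intermediate'' region you linearize around the cylinder $\phi\equiv\sqrt{2(n-1)}$ and appeal to a Hermite-type operator ``whose eigenmodes dictate the $\gamma$-dependence.'' This is exactly the Angenent--Vel\'azquez mechanism for the \emph{compact} degenerate neckpinch, where the discreteness of the Hermite spectrum forces the quantized rates $(T-t)^{1/m-1}$, $m\ge 3$. In the noncompact problem at hand that machinery would either quantize $\gamma$ (contradicting the theorem's statement that every $\gamma>1/2$ is realized) or fail to produce a consistent matching. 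The paper instead uses only \emph{two} regions: an interior region $z=\phi e^{\gamma\tau}=O(1)$ and an exterior region $R\,e^{-\gamma\tau}\le\phi<\sqrt{2(n-1)}$, and in the exterior region it solves the first-order ODE $-\left(\tfrac{n-1}{\phi}-\tfrac{\phi}{2}\right)y_\phi+(\gamma-1/2)y=0$ directly, giving $\tilde y\propto[2(n-1)-\phi^2]^{1/2-\gamma}$ for a continuous range of $\gamma$. There is no intermediate Hermite region; the continuous family of rates is precisely a consequence of not imposing the self-adjoint boundary conditions that would discretize the spectrum.

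Two further concrete issues. First, you propose to run the comparison principle in the variable $y$ and acknowledge the difficulty at $y\to\infty$, proposing to invoke S\'aez--Schn\"urer and match at infinity. The paper resolves this more cleanly by passing to $\lambda=-1/y$, which compactifies the cylindrical end so that the domain is effectively $\phi\in(-\sqrt{2(n-1)},\sqrt{2(n-1)})$ with $\lambda\to 0$ at the endpoints; the barriers $\lambda^\pm$ vanish there, so the boundary contribution to the comparison argument is trivial. Without this change of variable, closing the maximum principle at spatial infinity is nontrivial and you do not indicate how to do it rigorously. Second, you define $\mathscr{G}$ by requiring the graph to lie between the barriers at ``some sufficiently negative $\tau_0$''; since $\tau=-\log(T-t)\nearrow+\infty$ as $t\nearrow T$, the correct requirement (as in the paper) is a sufficiently \emph{large} $\tau_0$, at which point the various $O(\tau e^{-2\gamma\tau})$ error terms in the patching estimates are small. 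These are fixable, but the intermediate-region/Hermite step is the genuine conceptual gap: that mechanism belongs to the compact setting and is incompatible with the claimed continuous spectrum of blow-up exponents.
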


Roughly speaking, the proof of this theorem (which we carry out in detail below) proceeds as follows. Assuming the hypersurface to be a rotationally symmetric graph (as described above), we find that the MCF equation reduces to a quasilinear  parabolic PDE of a scalar function. Then applying matched asymptotic analysis, we formally construct approximate solutions to the rescaled versions of this PDE. For each such approximate solution, we construct subsolutions and supersolutions which, if carefully patched, form barriers for the rescaled PDE. These barriers carry information of the approximate solution for times very close to the vanishing time. Hence, once we have shown (using a comparison principle) that any solution starting from initial data between the barriers does stay between them for all time, and once we have determined that such initial data sets do exist, we can conclude that there are MCF solutions which exhibit the behavior found in a region near the tip in the approximate solutions. This method of proof based on matched asymptotic analysis has been used in a number of studies of Type-I and Type-II singularities which develop both in Ricci flow  \cite{AIK12, Wu14} and in MCF \cite{AV97, ADS15}.

We note again that while some of the general features of the behavior of hypersurfaces of the form $\Gamma_t$ evolving under MCF have been described in \cite{SS14}, what we do here is to describe the precise asymptotic profile of the solution and quantify the curvature blow-up rate near the vanishing time for the class of complete rotationally symmetric convex graphs. In addition to partially answering Open Problem (1) and shedding light on Open Problem (3) as listed in \cite{SS14}, our result appears to give the first set of examples of \emph{noncompact} solutions to MCF with curvature blowing up at Type-II rates. We note the striking differences of our results from those of Angenent and Vel\'{a}zquez \cite{AV97}, who have constructed a set of MCF solutions on compact hypersurfaces which develop Type-II singularities with discrete ``quantized'' blow up rates of the form $(T-t)^{1/m-1}$ for integer $m\geq 3$. These differences mirror the differences found between Type-II singularities in Ricci flow on noncompact manifolds \cite{Wu14} and those on compact manifolds \cite{AIK11, AIK12}.

Both MCF and Ricci flow are important analytic tools with powerful topological applications. An understanding of the finite-time singularities of these flows has been shown to be very useful in devising protocols for surgery along the flows, which in turn are crucial for using the flows to study the relationship between topological and geometric aspects of manifolds and hypersurfaces.  Perelman's proof of the Poincar\'{e} and Geometrization Conjectures \cite{Per1, Per2} using surgery provides the prime  example of this for Ricci flow, while the analysis of two-convex hypersurfaces by Huisken and Sinestrari \cite{HS09} and the analysis of mean-convex surfaces by Brendle and Huisken \cite{BH15} provide prime examples of this for MCF (see also the work on MCF with surgery by Haslhofer and Kleiner \cite{HK14}).

This paper is organized as follows. Section \ref{formal} describes the construction of the approximate (formal) solutions by the formal matched asymptotics. In Section \ref{super-sub}, we use these approximate solutions to construct the corresponding supersolutions and subsolutions to the rescaled PDE. The supersolutions and subsolutions are ordered and patched to create the barriers to the rescaled PDE in Section \ref{barriers}; a comparison principle for the subsolutions and supersolutions is also proved there. In Section \ref{existence}, we use these results to complete the proof of our main theorem.

%\subsection*{Acknowledgments}

%%%%%%%%%%%%%%%%%
\section{Matched asymptotic analysis and the construction of formal solutions}\label{formal}

As seen in a number of the works which study the detailed asymptotics of the formation of degenerate neckpinches in MCF or Ricci flow \cite{AV97, AIK11, AIK12, Wu14}, a key first step in such a study is to use matched asymptotic analysis to produce formal approximate solutions of the flow. Formal solutions of this sort serve both as templates to which the actual solutions are shown to approach asymptotically, and as guides for the construction of subsolutions and supersolutions.

\subsection{The formal solutions in the form $y(z,\tau)$ or $y(\phi,\tau)$}\label{yztau}

To begin our derivation of a class of formal approximate solutions, we assume that for large values of $\tau$, the terms $\left. \p_\tau \right\vert_{\phi} y$ and $\displaystyle{\frac{y_{\phi\phi}}{1+e^{2\gamma\tau}y^2_\phi}}$ in equation \eqref{eq:y(phi,tau)} are negligible. It follows that the PDE \eqref{eq:y(phi,tau)} can be approximated by the (linear) ODE
\begin{align}
\label{truncd}
 - \left(\frac{(n-1)}{\phi}-\frac{\phi}{2}\right) y_\phi + (\gamma-1/2)y = 0, 
\end{align}
for which the general solution takes the form
\begin{align}\label{eq:truncdsoln}
  \tilde y (\phi) & = C_1 \left[2(n-1) - \phi^2 \right]^{1/2 - \gamma},
\end{align}
where $C_1$ is an arbitrary constant, and $\phi\in[0,\sqrt{2(n-1)})$. If $\gamma>1/2$, then $\tilde y \nearrow \infty$ as $\phi\nearrow\sqrt{2(n-1)}$. This is consistent with the surface being asymptotic to a cylinder at spatial infinity. Since this is a desired feature for the solutions of interest, we henceforth assume that $\gamma>1/2$.

To check the consistency of the assumptions we have made in obtaining the ODE \eqref{truncd}, we substitute the solution $\tilde y$ into the quantity $\displaystyle{\frac{y_{\phi\phi}}{1+e^{2\gamma\tau}y^2_\phi}}$, obtaining 
\begin{align*}
 \frac{\tilde y_{\phi\phi}}{1 + e^{2\gamma\tau} {\tilde y_\phi}^2} & = \frac{2C_1(2\gamma-1)(n-1+\gamma \phi^2)(2n-2-\phi^2)^{\gamma-1/2}}{(2n-2-\phi^2)^{2\gamma+1} + C_1^2  (1-2\gamma)^2  \left(\phi e^{\gamma\tau} \right)^2}.
\end{align*}
This suggests that  $\tilde y$ is a reasonable approximate solution, provided that $\phi e^{\gamma\tau}$ is sufficiently large. We note that for any fixed choice of $\epsilon > 0$, there exists large  $\tau$ such that 
for $\phi \in [\epsilon, 2(n-1)]$, this quantity is in fact large. We label as the \emph{exterior region} the (dynamic) region in which this holds.

To investigate the solution in the complementary \emph{interior region}, where $\phi e^{\gamma\tau} = O(1)$, it is useful to define a new variable 
\begin{align}
z = \phi e^{\gamma \tau}.
\end{align}
We note that the condition $z=O(1)$ is equivalent to the condition $\phi = O\left( e^{-\gamma\tau}\right)$, which corresponds to a region near the tip (at which $\phi = 0$). Using the change-of-variables formula
\begin{align*}
 \p_\tau|_z y = \p_\tau|_{\phi} y - \gamma z y_z,
\end{align*}
together with equation \eqref{eq:y(phi,tau)}, we obtain  the evolution equation for $y(z,\tau)$:
\begin{align}\label{eq:y(z,tau)}
\left. \p_\tau\right\vert_z y & = \frac{y_{zz}}{e^{-2\gamma\tau} + e^{2\gamma\tau}y^2_z} +  \left(e^{2\gamma\tau}\frac{(n-1)}{z} - (\gamma+1/2) z \right) y_z + (1/2-\gamma)y.
\end{align}

To construct a formal solution to equation \eqref{eq:y(z,tau)} in the interior region, we follow the approach used in \cite{AV97} and consider the ansatz
\begin{align}\label{eq:tipansatz-y}
 y = \tilde A + e^{-2\gamma\tau} \tilde F(z,\tau),
\end{align}
where $\tilde A$ is a constant. Substituting this ansatz formula \eqref{eq:tipansatz-y} into equation \eqref{eq:y(z,tau)}, we obtain the PDE
\begin{align}\label{eq:tildeAF}
\frac{\tilde F_{zz}}{1 + \tilde F^2_z} + (n-1) \frac{\tilde F_z}{z} & = (\gamma-1/2) \tilde A + e^{-2\gamma\tau}\left[ (\gamma+1/2)(z \tilde F_z - \tilde F) + \left.\p_\tau\right\vert_z \tilde F \right]. 
\end{align}
Continuing the formal argument, we assume that for $\tau$ very large, the term in \eqref{eq:tildeAF} with the coefficient $e^{-2\gamma\tau}$ is negligible. Equation \eqref{eq:tildeAF} then reduces  to the ODE
\begin{align}\label{eq:tildeF}
 \frac{\tilde F_{zz}}{1+\tilde F^2_z} + (n-1)\frac{\tilde F_z}{z} & =  (\gamma-1/2) \tilde A.
\end{align}

To solve \eqref{eq:tildeAF}  for $\tilde F$, we define $\tilde P(z)$ to be the unique solution to the initial value problem 
\begin{align}\label{eq:tildeP}
 \frac{\tilde P''}{1+(\tilde P')^2} + (n-1)\frac{\tilde P'}{z} = 1,\quad  \tilde P(0) = \tilde P'(0) = 0.
\end{align}
We then readily verify that if  $\tilde F$ is given by
\begin{align}\label{eq:tildeF(z,tau)}
 \tilde F(z,\tau) = \frac{1}{(\gamma-1/2) \tilde A} \tilde P\left( (\gamma-1/2) \tilde A z \right) + C(\tau),
\end{align}
where $C(\tau)$ is an arbitrary function of time, then $\tilde F$ satisfies \eqref{eq:tildeF}. We note the following: 

\begin{remark}
For dimension $n=1$, equation \eqref{eq:tildeP} reduces to
\begin{align*}
\frac{\tilde P''}{1+(\tilde P')^2} = 1,\quad  \tilde P(0) = \tilde P'(0) = 0,
\end{align*}
whose solution is $\tilde P(z) = -\log \cos z$. The graph of $x= t-\log \cos z$, for $z\in(-\pi/2,\pi/2)$, has been labeled the ``Grim Reaper''. It translates with constant velocity along the $x$-axis and is a solution to the curve-shortening flow (i.e., 1-dimensional MCF). For $n\geq 2$, solving \eqref{eq:tildeP} for the function $\tilde P$ corresponding to $n$ dimensions and then rotating the graph of $x=c^{-1}\tilde{P}(cz)+ct$ around the $x$-axis defines a higher dimensional analog of the ``Grim Reaper''. This MCF solution is a translating soliton. 
\end{remark}

The initial value problem \eqref{eq:tildeP} has been solved in \cite[pp.24--25]{AV97} for general diimensions. It has a unique convex solution with the following asymptotics:
\begin{align}\label{eq:asymptotics-tildeP}
 \tilde P(z) &= \left\{
  \begin{array}{cc}
   z^2/2n + o\left(z^2\right), & z\searrow 0;\\ \\
   z^2/(2n-2) - \log z + O\left(z^{-2}\right),& z\nearrow\infty.
     \end{array}
  \right.
\end{align}
It then follows that the asymptotics for $y(z,\tau)$ take the form
\begin{align}\label{eq:asymptotics-y}
y(z) &= \left\{
  \begin{array}{cc}
   \tilde A + e^{-2\gamma\tau} C(\tau) + e^{-2\gamma\tau} (\gamma-1/2)\tilde A \frac{z^2}{2n} + o\left(e^{-2\gamma\tau}z^2\right), & z\searrow 0;\\ \\
   \vspace{10pt}
   \tilde A + e^{-2\gamma\tau} C(\tau) + e^{-2\gamma\tau} (\gamma-1/2)\tilde A \frac{z^2}{2(n-1)} + O\left(e^{-2\gamma\tau}\log z\right),& z\nearrow\infty.
     \end{array}
  \right.
\end{align}
Recalling the scaling formulas $x = y (T-t)^{1/2-\gamma}$ and $z = u (T-t)^{-\gamma - 1/2}$, as well as the interior region ansatz formula  \eqref{eq:tipansatz-y} and the expression \eqref{eq:asymptotics-tildeP} for the asymptotics of $\tilde P$,  we obtain the following asymptotic expression for $x$ in a neighborhood of the tip (at $z=0$):
\begin{align*}
 x & = y (T-t)^{1/2-\gamma} = y e^{(\gamma-1/2)\tau} \\
   & =  e^{(-\gamma-1/2)\tau}C(\tau) + \frac{\tilde{A}}{(T-t)^{\gamma-1/2}} + \frac{(\gamma-1/2)\tilde A}{(T-t)^{\gamma+1/2}} \frac{u^2}{2n} + o\left((T-t)^{(-\gamma-1/2)}u^2 \right).
\end{align*}

The highest curvature occurs at the tip (as we verify below in Lemma \ref{tip}), so the mean curvature and hence the normal (horizontal)  velocity attain their maximal values there. Substituting in the asymptotic expression for $x$ obtained above, we have 
\begin{align}\label{eq:meancurv}
 \left. H \right\vert_{\text{tip}} & = n \left. \frac{d^2 x}{du^2}\right   \vert_{u=0} = \frac{(\gamma-1/2)\tilde A}{(T-t)^{\gamma+\frac{1}{2}}}.
\end{align}
If $\gamma>1/2$ (which we presume throughout this work), then it follows from \eqref{eq:meancurv} that the curvature blows up at Type-II rate. Moreover, over the time period $[t_0,T)$, the tip of the surface moves  along the $x$-axis to the right from its initial position $x_0$ by the amount
\begin{align*}
 \int_{t_0}^{T} H \; ds & = \lim\limits_{t\nearrow T} \int_{t_0}^{t} H \; ds\\
    & = \lim\limits_{t\nearrow T} \frac{(\gamma-1/2)\tilde A}{(T-t)^{\gamma-1/2}} - x_0 \\
    & = \infty.
\end{align*}
Hence, we see that in terms of the $x$-coordinate, the surface evolving by MCF disappears off to spatial infinity as $t\nearrow T$. However in terms of  the $y$-coordinate, so long as one chooses $C(\tau)=O(\tau)$ (cf. Section 3.2), one finds that the tip remains a finite distance from the origin for all time $\tau$ since
\begin{align*}
y_0(\tau) & = \tilde A + e^{-2\gamma\tau}C(\tau),\\
  & \approx \tilde A.
\end{align*}

The formal solutions constructed separately in the interior and the exterior regions each involves a free parameter. Matching the formal solutions on the overlap of the two regions, we can establish an algebraic relationship between them. 
Using the large $z$ asymptotic expansion formula \eqref{eq:asymptotics-y} for the solution $y(z)$ in the interior region, one has (setting $z$ equal to a constant $R$, and presuming that $\tau$ is very large) 
\begin{align}\label{eq:match-formal-int}
 y & = \tilde A + e^{-2\gamma\tau} C(\tau)  e^{-2\gamma\tau} \frac{(\gamma-1/2)\tilde A}{2(n-1)} R^2 + O\left(e^{-2\gamma\tau}\log R \right), \notag\\
   & \approx \tilde A.
\end{align}
In the exterior region, again setting $z=R$ (and therefore $\phi = Re^{-\gamma \tau}$) and again presuming very large $\tau$, we have
\begin{align}\label{eq:match-formal-ext}
   y & = C_1\left[2(n-1)-\phi^2\right]^{1/2-\gamma} \notag\\
     & \approx C_1\left[2(n-1)\right]^{1/2-\gamma}.
\end{align}
Matching \eqref{eq:match-formal-int} with \eqref{eq:match-formal-ext}, we obtain 
\begin{align*}
 \tilde A \approx C_1\left[2(n-1)\right]^{1/2-\gamma}.
\end{align*}

We now collect these results and write out expressions for our formal solutions, both in the interior and the exterior regions. We fix the constant $\gamma >1/2$ (one of the free parameters in the formal solution). In the interior region, which is characterized by $z = \phi e^{\gamma\tau} = u e^{(\gamma+1/2)\tau} = O(1)$, we blow up the  MCF solution $u(t,x)$  at the prescribed Type-II rate $(T-t)^{-\gamma-1/2}$. We also rescale the coordinates in accord with how fast the surface moves under mean curvature flow by setting $y = x (T-t)^{\gamma-1/2}$. Then in this interior region (see Lemma \ref{interior-supersub} for the precise definition of the interior region), the formal solution is given by 
\begin{align*}
y_{form, int} & = \tilde A +  e^{-2\gamma\tau} C(\tau) + e^{-2\gamma\tau} \tilde F(z),
\end{align*}
where $\tilde F$ and the as-yet-unspecified function $ C(\tau)$ are related to $\tilde P$ as specified in \eqref{eq:tildeF(z,tau)}, and where $\tilde P$ is the solution to the initial value problem \eqref{eq:tildeP}.

In the exterior region, where $R e^{-\gamma\tau} \leq \phi < \sqrt{2(n-1)}$ for some $R>0$ (see Lemma \ref{exterior-supersub} for the precise definition of the exterior region), the formal solution takes the form
\begin{align*}
 y_{form, ext} & = \frac{\tilde A}{[2(n-1)]^{1/2 -\gamma}} \left[2(n-1) - \phi^2 \right]^{1/2 - \gamma}.
\end{align*}
We note that as $\phi\nearrow\sqrt{2(n-1)}$, one has $w\nearrow\infty$, which indicates that the exterior formal solutions are  asymptotic to and lie strictly within the cylinder of radius $\sqrt{2(n-1)}$.

%%%%%%%%%%%%%%%%%%%%%%%%%%%%%%%%%

\subsection{The formal solutions revisited in the form  $\lambda(z,\tau)$ or $\lambda(\phi, \tau)$}\label{lambdaztau}

To prove the main result (Theorem \ref{thmmain}) of this paper, it is useful to work with the quantity $\lambda := -1/y$, since in terms of $\lambda$, the asymptotically cylindrical end of the embedded hypersurface corresponding to large values of $y$ is effectively compactified. The MCF evolution equation for $\lambda$ is readily obtained by substituting the definition of $\lambda$ into \eqref{eq:y(phi,tau)}:
\begin{align}\label{eq:lambda(phi,tau)}
 \left. \p_\tau \right\vert_\phi \lambda & = \frac{\lambda_{\phi\phi}-2\lambda^2_\phi/\lambda}{1+e^{2\gamma\tau}\lambda^2_\phi/\lambda^4} + \left(\frac{n-1}{\phi} - \frac{\phi}{2}\right)\lambda_\phi + (\gamma-1/2)\lambda.
\end{align}
The class of MCF solutions we consider here correspond to (convex) solutions of equation \eqref{eq:lambda(phi,tau)} subject to the following effective boundary conditions: the rotational symmetry of the graph implies that $\lambda_\phi (0,\tau) = 0$, and the asymptotically cylindrical condition implies that $\lambda(\sqrt{2(n-1)}, \tau) = 0$. Working with the even-extension of $y(\phi, \tau)$, we find that the MCF solutions of interest also correspond to (convex) even solutions to equation \eqref{eq:lambda(phi,tau)} subject to the boundary conditions $\lambda(-\sqrt{2(n-1)}, \tau) = \lambda(\sqrt{2(n-1)}, \tau) = 0$.

As in the analysis done above (in Section \ref{yztau}) in terms of $y$, it is useful here in working with $\lambda$ to use the dilated spatial variable $z=\phi e^{\gamma\tau}$. The evolution equation for $\lambda(z,\tau)$ then takes the form
\begin{align}\label{eq:lambda(z,tau)}
 \left. \p_\tau \right\vert_z \lambda & = \frac{e^{2\gamma\tau}(\lambda_{zz}-2\lambda^2_z/\lambda)}{1+e^{4\gamma\tau}\lambda^2_z/\lambda^4} + e^{2\gamma\tau}(n-1)\frac{\lambda_z}{z} - (\gamma+1/2)z\lambda_z + (\gamma-1/2)\lambda.
\end{align}

We now construct the formal solutions in terms of $\lambda(z,\tau)$ or $\lambda(\phi,\tau)$, using arguments very similar to those used above in constructing the formal solutions in  terms of $y(z,\tau)$ or $y(\phi,\tau)$. 

In the interior region, where we expect $z=O(1)$, we use the ansatz
\begin{align*}
\lambda=-A+e^{-2\gamma\tau}F(z),
\end{align*}
where $A$ is a positive constant. Substituting this ansatz into equation \eqref{eq:lambda(z,tau)}, we find that 
$F$ must satisfy
\begin{align}
\label{Flambda}
 e^{-2\gamma\tau}\left( -2\gamma F \right) & = \frac{F_{zz} - 2e^{-2\gamma\tau} F^2_z/(-A+e^{-2\gamma\tau}F)}{1+F^2_z/(-A+e^{-2\gamma\tau} F)^4} + (n-1)\frac{F_z}{z}\\
 \nonumber &\quad  - e^{-2\gamma\tau}(\gamma+1/2) z F_z - (\gamma-1/2)A + e^{-2\gamma\tau}(\gamma-1/2)F.
\end{align}
Assuming, for the sake of the formal argument, that the terms in equation \eqref{Flambda} with the coefficient $e^{-2\gamma\tau}$ in equation \eqref{Flambda} can be neglected if $\tau$ is large, we find that \eqref{Flambda} reduces to the following ODE for $F$:
\begin{align}\label{eq:ODE-F}
 \frac{F_{zz}}{1+F^2_z/A^4} + (n-1) F_z/z = (\gamma-1/2)A.
\end{align}
To find solutions to \eqref{Flambda}, we rescale $F$ according to
\begin{equation}
\label{FP}
F(z)= \frac{A^3}{\gamma-1/2} P(\bar z),
\end{equation}
where $\bar z := z(\gamma-1/2)/A$, and determine that $P$ satisfies the ODE
\begin{align*}
 \frac{P_{\bar{z}\bar{z}}}{1+(P_{\bar{z}})^2} + (n-1)\frac{P_{\bar{z}}}{\bar z} = 1.
\end{align*}
Subject to the initial conditions $P(0)=P'(0)=0$, we can solve for $P$ uniquely (cf. equation \eqref{eq:tildeP}). Moreover, the asymptotic expansions of $P(\bar z)$ are known (see \cite[pp.24-25]{AV97}):
\begin{align*}
 P(\bar z) &= \left\{
  \begin{array}{cc}
   \frac{1}{2n} \bar z^2 + o\left(\bar z^2\right), & \bar z\searrow 0;\\ \\
   \frac{1}{2(n-1)} \bar z^2 - \log \bar z + O\left(\bar z^{-2} \right),& \bar z\nearrow\infty.
     \end{array}
  \right.
\end{align*}
Consequently, the asymptotic expansions of $F(z)$ are as follows:
\begin{align}\label{eq:asymp-F}
 F(z) &= \left\{
  \begin{array}{cc}
   \frac{(\gamma-1/2)A}{2n} z^2 + o\left(z^2\right), & z\searrow 0;\\ \\
   \frac{(\gamma-1/2)A}{2(n-1)} z^2 - \frac{A^3}{(\gamma-1/2)}\log( (\gamma-1/2)A^{-1} z) + O\left( z^{-2}\right),& z\nearrow\infty.
     \end{array}
  \right.
\end{align}

In the exterior region, examining  the evolution of $\lambda(\phi,\tau)$ as governed by the PDE \eqref{eq:lambda(phi,tau)}, we assume (as part of the formal argument) that the term $\displaystyle{\frac{\lambda_{\phi\phi}-2\lambda^2_\phi/\lambda}{1+e^{2\gamma\tau}\lambda^2_\phi/\lambda^4}}$ is negligible for $\tau$ large. Further, we note that any stationary solution of the equation
\begin{align}\label{eq:ODE-lambdabar}
 \left(\frac{n-1}{\phi} - \frac{\phi}{2} \right)\bar\lambda_\phi + (\gamma-1/2)\bar\lambda = 0
\end{align}
is an approximate solution to $\lambda(\phi,\tau)$. We can solve for $\bar\lambda(\phi)$ explicitly, obtaining 
\begin{align}\label{eq:lambdabar}
 \bar\lambda(\phi) & = C [2n-2-\phi^2]^{\gamma-1/2}
\end{align}
for an arbitrary constant $C$.

%%%%%%%%%%%%%%%%%%%%%%%%%%%%%%%%%

%%%%%%%%%%%%%%%%%%%%%%%%%%%%%%%%%

\section{Supersolutions and subsolutions}\label{super-sub}

For a differential equation of the form $\mathcal{D}[\psi]=0$, a function $\psi_+$ is a \emph{supersolution} if $\mathcal{D}[\psi^+]\ge 0$, while $\psi^-$ is a \emph{subsolution} if $\mathcal{D}[\psi^-]\le 0$. If there exist a supersolution $\psi^+$ and a subsolution $\psi_-$ for the differential operator $\mathcal{D}$, and if they satisfy the inequality $\psi^+ \ge \psi^-$, then they are called \emph{upper and lower barriers}, respectively. If $\mathcal{D}[\psi]=0$ admits solutions, then the existence of barriers $\psi^+\geq \psi^-$ implies that there exists a solution $\psi$ with $\psi^+ \ge \psi \ge \psi^-$. 

In this section, we construct subsolutions and supersolutions for the MCF of our models. We do this first separately in the interior and the exterior regions, and then (in the next section) combine these results to obtain subsolutions and supersolutions for the flow of a complete hypersurface.

\subsection{Interior region}

In the interior region, we work with  $\lambda(z,\tau)$, and with the corresponding  MCF equation \eqref{eq:lambda(z,tau)}. Hence, we work with the quasilinear parabolic operator
\begin{align}
\label{Tz}
 \mathcal{T}_z[\lambda] : = & \left. \p_\tau \right\vert_z \lambda - \frac{e^{2\gamma\tau}(\lambda_{zz}-2\lambda^2_z/\lambda)}{1+e^{4\gamma\tau}\lambda^2_z/\lambda^4} - e^{2\gamma\tau}(n-1)\frac{\lambda_z}{z} + (\gamma+1/2)z\lambda_z + (1/2 - \gamma)\lambda,
\end{align}
for which we seek subsolutions and supersolutions. We begin the construction of these as follows.

\begin{lemma}\label{interior-supersub}
For an integer $n\geq 2$, a real number $\gamma>1/2$, and a pair of real numbers $A^+, A^->0$, we define the function $F^{+}$ (or $F^-$) to be the solution to equation \eqref{eq:ODE-F} with the constant $A$ in that equation set to be $A^{+}$ (or $A^-$).

For any fixed constants $R_1>0$, $B^+, B^-$, and $E^+, E^-$, there exist functions $Q^+, Q^-:\RR\to\RR$, constants $D^+, D^-$, and a sufficiently large $\tau_1<\infty$ such that the functions
 \begin{align}\label{eq:int-supersub}
\lambda^{\pm}_{int} = \lambda^{\pm}_{int} (z,\tau) := -A^{\pm} + e^{-2\gamma\tau}F^{\pm}(z) + e^{-2\gamma\tau} \left(B^{\pm}\tau + E^{\pm}\right)  + \tau e^{-4\gamma\tau}D^{\pm}Q^{\pm}(z)
\end{align}
are supersolution ($+$) and subsolution ($-$), respectively, of $\mathcal{T}_z[\lambda]=0$ on the interval $0 \leq |z| \leq R_1$ for all $\tau\geq\tau_1$.

The function $Q^{+}$ (or $Q^{-}$) depends on $A^{+}$ and $F^{+}(z)$ (or on $A^{-}$ and $F^-(z)$); the constant $D^{+}$ (or $D^{-}$) depends on $n$, $\gamma$, $A^{+}, B^{+}$ (or $A^{-}, B^{-}$), and $R_1$.
\end{lemma}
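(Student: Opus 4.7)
Proof proposal for Lemma \ref{interior-supersub}:

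The strategy is to substitute $\lambda^{\pm}_{int}$ into the quasilinear parabolic operator $\mathcal{T}_z$ of \eqref{Tz}, expand the result carefully in the small parameter $\epsilon := e^{-2\gamma\tau}$ while tracking explicit factors of $\tau$, and then exploit the freedom in $Q^{\pm}$ and $D^{\pm}$ to arrange the sign of the dominant residual. The leading $O(1)$ part of $\mathcal{T}_z[\lambda^{\pm}_{int}]$ collapses to
\[
(\gamma-1/2)A^{\pm} - \frac{(F^{\pm})''}{1 + ((F^{\pm})')^{2}/(A^{\pm})^{4}} - (n-1)\frac{(F^{\pm})'}{z},
\]
which vanishes identically by the very ODE \eqref{eq:ODE-F} that $F^{\pm}$ is defined to satisfy. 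The entire argument therefore reduces to an analysis of the subleading residuals.

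The key observation is that, because each spatial-derivative term in $\mathcal{T}_z$ carries a factor of $e^{2\gamma\tau}$ (from the rational nonlinearity and from the cylindrical drift $(n-1)\lambda_z/z$), the apparently tiny correction $\tau e^{-4\gamma\tau} D Q(z)$ contributes to $\mathcal{T}_z$ at the much larger order $\tau\epsilon$, through the linear differential expression
\[
\mathcal{M}[Q](z) := \frac{Q''(z)}{1 + (F')^{2}/A^{4}} + (n-1)\frac{Q'(z)}{z} - \frac{2\, F''(z) F'(z) Q'(z)}{A^{4}\bigl[1 + (F')^{2}/A^{4}\bigr]^{2}},
\]
contributing $-\tau\epsilon\, D\, \mathcal{M}[Q](z)$. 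A careful expansion of the rational term $\frac{e^{2\gamma\tau}(\lambda_{zz} - 2\lambda_z^{2}/\lambda)}{1 + e^{4\gamma\tau}\lambda_z^{2}/\lambda^{4}}$ about $\lambda \approx -A$, together with the time-derivative, drift, and $(1/2-\gamma)\lambda$ contributions of the $(B\tau + E)$ piece, produces a further $O(\tau\epsilon)$ term of the form $\tau\epsilon\, B\, \mathcal{B}(z)$, where $\mathcal{B}(z) = -(3\gamma - 1/2) + 4 F''(F')^{2} / \bigl(A^{5}[1 + (F')^{2}/A^{4}]^{2}\bigr)$ is a continuous (hence bounded) function on $[0, R_1]$.

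I would then construct $Q^{\pm}$ as the unique smooth solution on $[0, R_1]$ of the IVP $\mathcal{M}[Q^{\pm}] = 1$, $Q^{\pm}(0) = (Q^{\pm})'(0) = 0$, extended evenly to $\RR$. The apparent singularity at $z = 0$ is regular, since $(F^{\pm})'(0) = 0$ and $F^{\pm}$ is smooth (from the representation $F^{\pm}(z) = \frac{(A^{\pm})^{3}}{\gamma - 1/2} P\bigl(z(\gamma - 1/2)/A^{\pm}\bigr)$); evaluating at $z = 0$ via L'H\^{o}pital forces $(Q^{\pm})''(0) = 1/n$, and extension to $[0, R_1]$ follows from classical second-order linear ODE theory on the regular interval $(0, R_1]$. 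With $Q^{\pm}$ fixed, the $O(\tau\epsilon)$ coefficient of $\mathcal{T}_z[\lambda^{\pm}_{int}]$ reduces to $B^{\pm}\mathcal{B}^{\pm}(z) - D^{\pm}$. Choosing
\[
D^{+} \leq \min_{z \in [0, R_1]} B^{+}\mathcal{B}^{+}(z) - 1, \qquad D^{-} \geq \max_{z \in [0, R_1]} B^{-}\mathcal{B}^{-}(z) + 1,
\]
makes this coefficient uniformly $\geq 1$ in the supersolution case and $\leq -1$ in the subsolution case. All remaining terms in the expansion carry strictly smaller factors ($\epsilon$, $\tau\epsilon^{2}$, $\tau^{2}\epsilon^{2}$, \ldots) with coefficients that are rational expressions in $F, F', F'', Q, Q', Q''$, bounded on $[0, R_1]$; choosing $\tau_1$ large enough forces the $O(\tau\epsilon)$ term to dominate, yielding $\mathcal{T}_z[\lambda^{+}_{int}] \geq 0$ and $\mathcal{T}_z[\lambda^{-}_{int}] \leq 0$ throughout $\{\abs{z} \leq R_1,\, \tau \geq \tau_1\}$.

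The main technical obstacle is the bookkeeping: one must expand the rational nonlinearity consistently in both $\epsilon$ and $\tau$, verify that no neglected cross-term silently contributes at the critical order $\tau\epsilon$, and confirm that $\lambda^{\pm}_{int}$ stays bounded away from zero on $[0, R_1]$ for $\tau \geq \tau_1$ so that the denominators are well-defined. Once the expansion is organized, comparing the orders $1$, $\epsilon$, $\tau\epsilon$, and higher closes the argument.
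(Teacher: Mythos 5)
Your proposal is correct and follows essentially the same strategy as the paper's proof: expand $\mathcal{T}_z[\lambda^{\pm}_{int}]$, observe that the $O(1)$ residual vanishes because $F^{\pm}$ solves \eqref{eq:ODE-F}, define $Q^{\pm}$ by a linear second-order ODE (with a regular singular point at $z=0$, with $Q''(0)=1/n$) so that the $DQ$ correction produces a controllable term at order $\tau e^{-2\gamma\tau}$, bound the residual $B$-contribution at that same order uniformly on $[0,R_1]$, and then choose $D^{\pm}$ so the $\tau e^{-2\gamma\tau}$-coefficient has the desired sign, with $\tau_1$ large absorbing all lower-order terms. The only differences are cosmetic sign conventions in the ODE defining $Q^{\pm}$ (which simply flip the direction of the inequality for $D^{\pm}$) and the fact that your explicit expansion of the nonlinear term $II$ — in particular the coefficient $\frac{Q''}{1+(F')^2/A^4}+(n-1)\frac{Q'}{z}-\frac{2F''F'Q'}{A^4[1+(F')^2/A^4]^2}$ and the $B$-correction $\frac{4F''(F')^2}{A^5[1+(F')^2/A^4]^2}$ — is a bit more careful than the version written in the paper (which has some minor slips in the constants in the displayed expression for $II$), though these do not affect the argument since the relevant quantities are in any case bounded on $[0,R_1]$.
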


\begin{proof}
We detail the proof for $0\leq z \leq R_1$; the proof for $-R_1 \leq z \leq 0$ is essentially identical.

Let us define $Q^{+}$ to be the unique solution of the ODE initial value problem
\begin{align}\label{eq:ODE-Q}
  & -(A^{+})^{-2} \left[ \frac{Q'}{1+F^2_z/(A^{+})^4}\right]' - (n-1)\frac{Q'}{z} = 1,\\
  & Q(0)=Q'(0)=0. \notag
\end{align}
The function $Q^{-}$ is defined by replacing $A^{+}$ with $A^{-}$ in the above equation.

Applying the operator $\mathcal{T}_z$ defined above (see \eqref{Tz}) to the function $\lambda ^+_{int}$ from \eqref{eq:int-supersub}, we calculate
\begin{align*}
\mathcal{T}_z[\lambda^{+}_{int}] & = I + II + III + IV + V,
\end{align*}
where (to simplify the expressions, we replace ``$\lambda^+_{int}$'' by ``$\lambda$'' and omit the superscript ``$+$'' in the definition of $\lambda^+_{int}$)
\begin{align*}
  I & = \left. \p_\tau \right\vert_\phi \lambda\\
    & = -2\gamma B \tau e^{-2\gamma\tau} + B e^{-2\gamma\tau} - 2\gamma E e^{-2\gamma\tau},\\
III & = - e^{2\gamma\tau}(n-1) \frac{\lambda_z}{z}\\
    & = -(n-1)\frac{F_z}{z} - (n-1) \frac{Q'}{z} D\tau e^{-2\gamma\tau},\\
 IV & = (\gamma+1/2)z\lambda_z \\
    & = (\gamma+1/2)(zF_z)e^{-2\gamma\tau} + \tau e^{-4\gamma\tau} D Q',\\
  V & = (1/2-\gamma)\lambda \\
    & = (\gamma-1/2)A + (1/2-\gamma) B\tau e^{-2\gamma\tau} \\
    &\quad + (1/2-\gamma)\left[E e^{-2\gamma\tau} + e^{-2\gamma\tau}F(z) + D \tau e^{-4\gamma\tau}Q(z)\right],
\end{align*}
and
\begin{align*}
II & = - \frac{e^{2\gamma\tau}(\lambda_{zz}-2\lambda^2_z/\lambda)}{1+e^{4\gamma\tau}\lambda^2_z/\lambda^4}\\
   & = -\lambda^2\left[\arctan\left( e^{2\gamma\tau} \frac{\lambda_z}{\lambda^2} \right) \right]_z.
\end{align*}
To obtain an estimate for $II$, we calculate
\begin{align*}
 e^{2\gamma\tau} \frac{\lambda_z}{\lambda^2} & = \frac{F_z + \tau e^{-2\gamma\tau} D Q'}{A^2}\left[1 + O\left(\pi_1(l.o.t.) \right)\right],
\end{align*}
where $\pi_1(l.o.t.)$ is a polynomial (without constant term) in $\tau e^{-2\gamma\tau} B$, $e^{-2\gamma\tau} E$, $e^{-2\gamma\tau} F$, and $\tau e^{-4\gamma\tau} Q$; hence, we obtain 
\begin{align*}
II & = -\lambda^2\left[\arctan\left( e^{2\gamma\tau} \frac{\lambda_z}{\lambda^2} \right) \right]_z\\
   & = -\frac{F_{zz}}{1+F^2_z/A^4} - A^{-2} \left[\frac{ Q'}{1+F^2_z/A^4}\right]' D \tau e^{-2\gamma\tau} \\
   & \quad - \frac{2A^{-1}B F_{zz}}{1+F^2_z/A^4} \tau e^{-2\gamma\tau} + O\left( \pi_2(l.o.t.)\right),
\end{align*}
where $\pi_2(l.o.t.)$ is a polynomial (without constant term) in $e^{-2\gamma\tau} E$, $e^{-2\gamma\tau} F$, $\tau e^{-4\gamma\tau} Q$, and $\tau^2 e^{-4\gamma\tau} Q'Q''$.

It follows from the asymptotic expansion \eqref{eq:asymp-F} of $F$ that there exists constant a $C=C(R_1)>0$ such that $\left\vert F(z) \right\vert \leq C$ and $\displaystyle{ \left\vert \frac{2A^{-1}B F_{zz}}{1+F^2_z/A^4} \right\vert} \leq BC $ for $z\in[0, R_1]$. Then using equations \eqref{eq:ODE-F} and \eqref{eq:ODE-Q} which are satisfied by $F$ and $Q$, respectively, we have for $0\leq z \leq R_1$ and $\tau\geq \tau_1$ with $\tau_1$ sufficiently large,
\begin{align*}
e^{2\gamma\tau}\mathcal{T}_z[\lambda] & = \left\{D - \left(3\gamma-1/2\right)B - \frac{2A^{-1}B F_{zz}}{1+F^2_z/A^4} \right\}\tau + \left(E + F(z) + O(\tau e^{-2\gamma\tau})\right)\\
& \geq  \left\{D - \left(3\gamma-1/2 + C\right)B  \right\}\tau + O(1)\\
& > 0;
\end{align*}
this last inequality holds so long as we choose 
 $D$ so that $D > (3\gamma-1/2 + C)B$;  that is,  $D^+ > (3\gamma-1/2 + C)B^+$.

By a similar argument, there exists $D^{-} < (3\gamma-1/2 - C)B^{-}$ such that $\mathcal{T}_z[\lambda^-]<0$ for $0 \leq z \leq R_1$ and $\tau\geq\tau_1$.

The lemma is therefore proven.
\end{proof}

\subsection{Exterior region}

In the exterior region, we work with the quantity $\lambda(\phi,\tau)$, and with the corresponding MCF equation \eqref{eq:lambda(phi,tau)}. Hence, defining the quasilinear parabolic operator
\begin{align}\label{eq:Fphi}
 \mathcal{F}_\phi[\lambda] &: =  \left. \p_\tau \right\vert_\phi \lambda - \frac{(\lambda_{\phi\phi}-2\lambda^2_\phi/\lambda)}{1+e^{2\gamma\tau}\lambda^2_\phi/\lambda^4} - \left(\frac{n-1}{\phi}-\frac{\phi}{2}\right)\lambda_\phi + (1/2 - \gamma)\lambda,
\end{align}
we seek sub and supersolutions for this operator. The existence of these is proven in the following:

\begin{lemma}\label{exterior-supersub}
For an integer $n\geq 2$ and a real number $\gamma>1/2$, we define\footnote{This definition is consistent with \eqref{eq:lambdabar}; therefore $\bar\lambda$ satisfies equation \eqref{eq:ODE-lambdabar}.}
\begin{align}\label{eq:barlambda}
\bar\lambda = \bar\lambda(\phi) := (2n-2-\phi^2)^{\gamma-1/2}.
\end{align}

For any fixed set of constants $R_2, c^+, c^->0$, there exist a function \\ $\psi:(-\sqrt{2(n-1)},\sqrt{2(n-1)}) \to \RR$, a pair of constants $b^+$ and  $b^-$, and sufficiently large $\tau_2<\infty$ such that the functions
\begin{align}\label{eq:ext-supersub}
 \lambda^{\pm}_{ext} & = \lambda^{\pm}(\phi,\tau) := -c^{\pm} \bar\lambda(\phi) + b^{\pm} e^{-2\gamma\tau}\psi(\phi)
\end{align}
are supersolution ($+$) and subsolution ($-$), respectively, of $\mathcal{F}_\phi[\tilde\lambda]=0$ on the interval\\
 $R_2 e^{-\gamma\tau} \leq \vert\phi\vert < \sqrt{2(n-1)}$ for all $\tau\geq\tau_2$. The constant $b^+$ (or $b^-$) depends on $n, \gamma$, $R_2$, and $c^+$ (or $c^{-}$).
\end{lemma}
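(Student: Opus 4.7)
The plan is to follow the template of Lemma \ref{interior-supersub}: substitute the ansatz into $\mathcal{F}_\phi$, use the reduced ODE \eqref{eq:ODE-lambdabar} satisfied by $\bar\lambda$ to cancel the leading-order contributions from $-c^\pm\bar\lambda$, and then choose $\psi$ and the constants $b^\pm$ so that the residual has the required sign. Since $\bar\lambda$ already annihilates the dominant linear part of $\mathcal{F}_\phi$, the work will be concentrated on controlling the quasilinear diffusion term and the $e^{-2\gamma\tau}$ correction.

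After substitution, the $\p_\tau$, transport, and reaction terms in \eqref{eq:Fphi} acting on $-c^\pm\bar\lambda$ cancel identically via \eqref{eq:ODE-lambdabar}, while the same linear pieces acting on $b^\pm e^{-2\gamma\tau}\psi$ collapse to $b^\pm e^{-2\gamma\tau}\mathcal{L}_0\psi$, where
\[
  \mathcal{L}_0\psi := -(3\gamma-\tfrac{1}{2})\psi - \Bigl(\tfrac{n-1}{\phi}-\tfrac{\phi}{2}\Bigr)\psi'.
\]
Expanding the quasilinear diffusion around $-c^\pm\bar\lambda$ yields $\mathcal{F}_\phi[\lambda^\pm_{ext}] = b^\pm e^{-2\gamma\tau}\mathcal{L}_0\psi + \mathcal{E}(\phi,\tau) + \mathcal{R}(\phi,\tau;b^\pm,\psi)$, with principal error
\[
  \mathcal{E}(\phi,\tau) = \frac{c^\pm\bigl(\bar\lambda_{\phi\phi} - 2\bar\lambda_\phi^2/\bar\lambda\bigr)}{1+e^{2\gamma\tau}\bar\lambda_\phi^2/((c^\pm)^2\bar\lambda^4)},
\]
and $\mathcal{R}$ consisting of cross terms linear in $b^\pm e^{-2\gamma\tau}\psi$ and its derivatives.

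Next I would estimate $\mathcal{E}$ uniformly on $R_2 e^{-\gamma\tau}\le|\phi|<\sqrt{2(n-1)}$ using $\bar\lambda=(2n-2-\phi^2)^{\gamma-1/2}$. Its denominator equals $1+C_{n,\gamma,c^\pm}\,e^{2\gamma\tau}\phi^2(2n-2-\phi^2)^{-(2\gamma+1)}$, which is of order $e^{2\gamma\tau}$ for $\phi$ bounded away from $0$ (and even larger as $\phi\to\pm\sqrt{2(n-1)}$ since $\gamma>1/2$), but only of order $1+O(R_2^2)$ at the inner boundary $\phi\sim R_2 e^{-\gamma\tau}$. The numerator is smooth and order one on compact subsets. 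Hence $\mathcal{E}=O(e^{-2\gamma\tau})$ in the bulk but is merely $O(1)$ at the inner boundary. I would therefore choose $\psi$ to be a positive function on the open interval satisfying $\psi(\phi)\sim\phi^{-\alpha}$ for some $0<\alpha<2$ as $\phi\to 0$, with $\psi$ vanishing at an appropriate rate at $\pm\sqrt{2(n-1)}$. With such a choice, $\mathcal{L}_0\psi$ is positive on the exterior interval and grows like $\phi^{-(\alpha+2)}$ as $\phi\to 0$; consequently $b^\pm e^{-2\gamma\tau}\mathcal{L}_0\psi$ is of order $e^{\gamma\alpha\tau}$ at the inner boundary and dominates $\mathcal{E}$ there, while in the bulk both quantities are $O(e^{-2\gamma\tau})$ and a large enough $|b^\pm|$ secures the sign.

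Finally I would take $b^+>0$ large and $b^-<0$ with large magnitude, both depending only on $n,\gamma,R_2,c^\pm$, and choose $\tau_2$ sufficiently large so that the cross terms $\mathcal{R}$ are absorbed into $b^\pm e^{-2\gamma\tau}\mathcal{L}_0\psi$. This yields $\mathcal{F}_\phi[\lambda^+_{ext}]\ge 0$ and $\mathcal{F}_\phi[\lambda^-_{ext}]\le 0$ throughout the stated interval. The main obstacle is the uniform estimate in the preceding step: the denominator of the diffusion term loses its $e^{2\gamma\tau}$ largeness at the inner boundary $\phi\sim R_2 e^{-\gamma\tau}$, so $\psi$ must be tuned to blow up at $\phi=0$ fast enough to overwhelm the $O(1)$ principal error there, yet slowly enough that $|b^\pm e^{-2\gamma\tau}\psi|$ remains strictly smaller than $c^\pm\bar\lambda$ on the inner side (preserving $\lambda^\pm_{ext}<0$ and the ordering needed for the subsequent barrier construction), while simultaneously its decay at $\pm\sqrt{2(n-1)}$ must be compatible with the asymptotically cylindrical boundary condition $\lambda^\pm_{ext}\to 0$.
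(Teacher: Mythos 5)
Your overall frame is right: use the ODE \eqref{eq:ODE-lambdabar} to kill the leading part, reduce the contribution of $b^\pm e^{-2\gamma\tau}\psi$ to the first-order operator $\mathcal{L}_0\psi = (1/2-3\gamma)\psi - \bigl(\tfrac{n-1}{\phi}-\tfrac{\phi}{2}\bigr)\psi'$, and pin down the principal diffusion error $\mathcal{E}$ coming from $-c^\pm\bar\lambda$. But there is a genuine gap in how you then try to close the sign condition, and it is at the outer end $\phi\nearrow\sqrt{2(n-1)}$ rather than at the inner boundary you focused on.

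You propose a pure domination argument: choose $\psi>0$ with $\psi\sim\phi^{-\alpha}$ near $0$, claim $\mathcal{L}_0\psi>0$ throughout, and make $|b^\pm|$ large so that $b^\pm e^{-2\gamma\tau}\mathcal{L}_0\psi$ overwhelms $\mathcal{E}$. That cannot work near the cylinder end. After multiplying by $e^{2\gamma\tau}$, the diffusion contribution converges (uniformly on compacta away from $\phi=0$) to $c^3\Lambda(\phi)$ with
\begin{align*}
\Lambda(\phi) = -\frac{2}{\gamma-1/2}\,\frac{n-1+\gamma\phi^2}{\phi^2}\,(2n-2-\phi^2)^{3(\gamma-1/2)},
\end{align*}
which vanishes like $(2n-2-\phi^2)^{3\gamma-3/2}$ as $\phi\nearrow\sqrt{2(n-1)}$. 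If you try to dominate this by $\mathcal{L}_0\psi$, note that with $w:=2n-2-\phi^2\to 0$ and $\psi\sim w^\beta$ one computes $\mathcal{L}_0\psi\sim(\beta-3\gamma+1/2)\,w^\beta$. Requiring $\mathcal{L}_0\psi>0$ forces $\beta>3\gamma-\tfrac12$, while requiring $|\mathcal{L}_0\psi|\gtrsim|\Lambda|$ forces $\beta\le 3\gamma-\tfrac32$. These are incompatible (and flipping signs of $\psi$ and $b^\pm$ runs into the same obstruction). So no choice of $\psi$ that keeps $\lambda^\pm_{ext}\to 0$ at the cylinder can make $b^\pm e^{-2\gamma\tau}\mathcal{L}_0\psi$ strictly dominate $\mathcal{E}$ near $\phi=\sqrt{2(n-1)}$; the two contributions must instead be arranged to \emph{cancel} there.

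This is precisely what the paper's proof does: it does not dominate but \emph{solves} the inhomogeneous ODE $\mathcal{L}_0\psi=\Lambda$ exactly (with general solution \eqref{eq:soln-psi}). Then $e^{2\gamma\tau}\mathcal{F}_\phi[\lambda^\pm_{ext}]=II + b^\pm\Lambda$ with $II = c^3\Lambda\{1+\pi(l.o.t.)\}$, so the expression collapses to $(b^\pm + c^3(1+\text{small}))\,\Lambda(\phi)$. Since $\Lambda<0$, the sign is set by the single scalar $b^\pm+c^3$, and the required inequalities follow by taking $b^+\le -2(c^+)^3$ and $b^-\ge -(c^-)^3/2$ (so $b^-$ can in fact be taken positive). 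The remaining work is to show that the ``small'' factor really is small uniformly on $R_2e^{-\gamma\tau}\le\phi<\sqrt{2(n-1)}$, which requires estimating $\psi/\bar\lambda$, $\psi'/\bar\lambda'$, $\psi''/\bar\lambda''$ near both endpoints; your proposal gestures at this via the cross-term $\mathcal{R}$, but this is exactly where the $\phi\searrow 0$ and $\phi\nearrow\sqrt{2(n-1)}$ asymptotics of the explicit $\psi$ in \eqref{eq:soln-psi} enter, giving the bounds $|\psi/\bar\lambda|, |\psi'/\bar\lambda'|, |\psi''/\bar\lambda''| \le M_3\phi^{-2}$ near $0$ (hence the appearance of $R_2^{-2}$ in $b^\pm$) and a $\tau$-independent bound on $[\delta,\sqrt{2(n-1)})$. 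In short: replace ``dominate with a positive $\psi$'' by ``solve $\mathcal{L}_0\psi=\Lambda$ and compare coefficients'' and the proof goes through.
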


\begin{proof}
We carry out  the proof for $\phi\in[R_2 e^{-\gamma\tau}, \sqrt{2(n-1)})$; the proof for \\ $\phi\in(-\sqrt{2(n-1)}, -R_2 e^{-\gamma\tau}]$ is essentially identical.

Applying the operator $\mathcal{F}_\phi$ defined in \eqref{eq:Fphi} to the function $\lambda^{+}_{ext}$ from \eqref{eq:ext-supersub}, we obtain 
\begin{align*}
 e^{2\gamma\tau}\mathcal{F}_\phi[\lambda^+_{ext}] & = I + II + III + IV,
\end{align*}
where (as in Lemma \ref{interior-supersub}, to simplify the notation we omit the superscript ``$+$'' and the subscript ``ext'')
\begin{align*}
 I & = e^{2\gamma\tau} \left. \p_\tau \right\vert_\phi \lambda  = -2\gamma b \psi,\\
 II & = - \frac{(\lambda_{\phi\phi}-2\lambda^2_\phi/\lambda)}{e^{-2\gamma\tau}+\lambda^2_\phi/\lambda^4},\\
 III & = - e^{2\gamma\tau} \left(\frac{n-1}{\phi}-\frac{\phi}{2}\right)\lambda_\phi \\
     & = - e^{2\gamma\tau}\left(\frac{n-1}{\phi}-\frac{\phi}{2}\right)c \bar\lambda' - \left(\frac{n-1}{\phi}-\frac{\phi}{2}\right) b\psi',\\
 IV & = e^{2\gamma\tau}(1/2 - \gamma)\lambda \\
    & = e^{2\gamma\tau}(1/2-\gamma)c\bar\lambda + (1/2-\gamma)  b\psi,
\end{align*}
where $\bar\lambda$ solves equation \eqref{eq:ODE-lambdabar}. 

Using \eqref{eq:ODE-lambdabar} and combining, we have 
\begin{align*}
 e^{2\gamma\tau}\mathcal{F}_\phi[\lambda^+_{ext}] & = II + b\left\{ (1/2 - 3\gamma)\psi - \left(\frac{n-1}{\phi}-\frac{\phi}{2}\right) \psi' \right\}.
\end{align*}
If we set $\Lambda = \Lambda(\phi) := - \left[(-\bar\lambda'') - 2(-\bar\lambda')^2/(-\bar\lambda)\right] \left[(-\bar\lambda)^4/(-\bar\lambda')^2\right]$, we calculate (using \eqref{eq:barlambda})
\begin{align*}
 \Lambda(\phi) & = - \frac{2}{\gamma-1/2} \frac{(n-1+\gamma\phi^2)}{\phi^2}(2n-2-\phi^2)^{3(\gamma-1/2)},
\end{align*}
and note that $\Lambda < 0$ for $0<\phi<\sqrt{2(n-1)}$. 

We now define the function $\psi$ to be any solution of the ODE
\begin{align}\label{eq:ODE-psi}
 (1/2 - 3\gamma) \psi - \left( \frac{n-1}{\phi} - \frac{\phi}{2} \right)\psi' & = \Lambda(\phi).
\end{align}
This ODE can be solved explicitly; the general solution $\psi$ is given by
\begin{align} \label{eq:soln-psi}
 \psi & =  \frac{2\gamma+1}{2\gamma-1}(2n-2-\phi^2)^{3\gamma-\frac{3}{2}} \\
 & \quad  + \frac{(2n-2-\phi^2)^{3\gamma-\frac{1}{2}}}{2(n-1)(2\gamma-1)}\left[ C_1 + \log(\phi^2) - \log(2n-2-\phi^2)\right] \notag
\end{align}
for an arbitrary constant $C_1$. It follows then that
\begin{align*}
 e^{2\gamma\tau}\mathcal{F}_\phi[\lambda^+_{ext}] & = II + b \Lambda(\phi).
\end{align*}

We now estimate the term $II$. Since (as follows from  \eqref{eq:ext-supersub})
\begin{align*}
 \lambda & = (-c\bar\lambda) \left[ 1 - e^{-2\gamma\tau} b\psi/(c\lambda)\right],\\
 \lambda_\phi & = (-c\bar\lambda') \left[ 1 - e^{-2\gamma\tau} b\psi'/(c\bar\lambda')\right],\\
  \lambda_{\phi\phi} & = (-c\bar\lambda'') \left[ 1 - e^{-2\gamma\tau} b\psi''/(c\bar\lambda'')\right],
\end{align*}
we need to estimate the terms $\psi/\bar\lambda$, $\psi'/\bar\lambda'$, and $\psi''/\bar\lambda''$. We  do this  by considering the asymptotics near $\phi=0$ and near $\phi=\sqrt{2(n-1)}$, respectively.

We first consider the asymptotics as $\phi\nearrow \sqrt{2(n-1)}$. Letting  $c_k$ for $k=0,1,2,\ldots$, denote constants that depend on $n$ and $\gamma$, we determine from \eqref{eq:barlambda} and \eqref{eq:ext-supersub} that as 
$\phi\nearrow \sqrt{2(n-1)}$, we have the following asymptotics
\begin{align*}
\psi/\bar \lambda & = \frac{(2n-2-\phi^2)^{2\gamma-1}}{2\gamma-1} \left[ c_0 + o( 1 ) \right],\\
\psi'/\bar \lambda' & = \frac{(2n-2-\phi^2)^{2\gamma-1}}{(2\gamma-1)^2} \left[ c_1 + o( 1 ) \right],\\
 \psi''/\bar \lambda'' & = \frac{(2n-2-\phi^2)^{2\gamma-1}}{(2\gamma-1)^2\left(1 - n + (-1 + \gamma) \phi^2\right)} \left[c_2 + o(1)\right].
\end{align*}
We note that
\begin{align*}
 \lim\limits_{\phi\nearrow \sqrt{2(n-1)}} \frac{1}{ \left(1 - n + (-1 + \gamma) \phi^2\right)} & = \frac{1}{3-2\gamma}
\end{align*}
is bounded if $\gamma\neq 3/2$; if $\gamma=3/2$, then 
\begin{align*}
 \psi''/\bar \lambda'' =  (2n -2 - \phi^2) \left[ c_3 + o(1) \right], \quad \phi\nearrow \sqrt{2(n-1)}.
\end{align*}
These asymptotics imply that for $ \delta \leq \phi < \sqrt{2(n-1)}$, there exists a constant $M_1$ (independent of $\tau$) such that 
\begin{align}\label{eq:bound-M1}
 \left\vert \psi/\bar \lambda \right\vert, \left\vert \psi'/\bar \lambda' \right\vert, \left\vert \psi''/\bar \lambda'' \right\vert \leq M_1.
\end{align}
Hence we have
\begin{align*}
 II & = - \frac{(\lambda_{\phi\phi}-2\lambda^2_\phi/\lambda)}{e^{-2\gamma\tau} + \lambda^2_\phi/\lambda^4} \\
    & = c^3 \Lambda(\phi) \left\{ 1 +  \pi_3\left( l.o.t. \right)  \right\},
\end{align*}
where is $\pi_3(l.o.t.)$ is a polynomial (without constant term) in $O\left(e^{-2\gamma\tau} b\psi/ (c\bar\lambda) \right)$, $O\left(e^{-2\gamma\tau} b\psi'/(c\bar\lambda')\right)$, and $O\left(e^{-2\gamma\tau} b\psi''/ (c\bar\lambda'')  \right)$. If we now fix $\delta>0$ (e.g., $\delta=1/2$), we see that  for $ \delta \leq \phi < \sqrt{2(n-1)}$, we have
\begin{align}\label{eq:estimate-pi3}
 |\pi_3(l.o.t.)| \leq M_2 e^{-2\gamma\tau}
\end{align}
with $M_2=M_2(b)$ a constant depending (polynomially) only on $b$, which is to be specified. It follows that 
\begin{align*}
e^{2\gamma\tau}\mathcal{F}_\phi[\lambda^+_{ext}] & = II + b \Lambda(\phi) \\
 & \geq \Lambda \left\{ b +  c^3 \left(1 +  M_2 e^{-2\gamma\tau} \right) \right\}.
\end{align*}
We can choose $\tau_2$ sufficiently large so that the inequality $M_2 e^{-2\gamma\tau_2}<1$ holds true for all $\tau\geq\tau_2$. Consequently, for $ \delta \leq \phi < \sqrt{2(n-1)}$, we have 
\begin{align*}
e^{2\gamma\tau}\mathcal{F}_\phi[\lambda^+_{ext}] & > \Lambda \left( b + 2c^3  \right)\\
 & > 0
\end{align*} 
for any $b$ satisfying $b < - 2c^3$.

We next consider the asymptotics as $\phi\searrow 0$. Letting  $d_k$ for  $k=0,1,2,\ldots$, denote constants that depend on $n$ and $\gamma$, we determine from \eqref{eq:barlambda}  and \eqref{eq:ext-supersub}
that as  $\phi\searrow 0$, we have the following asymptotics
\begin{align*}
\psi/\bar \lambda & = c_0 \log(\phi^2) + d_1 + O\left(\phi^2\log(\phi^2)\right),\\
\psi'/\bar \lambda' & = \phi^{-2} \left[ d_2 + O\left(\phi^2\log(\phi^2)\right) \right],\\
 \psi''/\bar \lambda'' & = \phi^{-2} \left[ d_3 + O\left(\phi^2\log(\phi^2)\right) \right].
\end{align*}
These asymptotics imply that for $0<\phi  \leq \delta$, there exists a constant $M_3$ (independent of $\tau$) such that
\begin{align}\label{eq:bound-M2}
 \left\vert \psi/\bar \lambda \right\vert, \left\vert \psi'/\bar \lambda' \right\vert, \left\vert \psi''/\bar \lambda'' \right\vert \leq M_3 \phi^{-2},
\end{align}
and hence we have
\begin{align*}
 II & = - \frac{(\lambda_{\phi\phi}-2\lambda^2_\phi/\lambda)}{e^{-2\gamma\tau} + \lambda^2_\phi/\lambda^4} \\
    & = c^3 \Lambda(\phi) \left\{ 1 +  \pi_4\left( l.o.t. \right)  \right\}
\end{align*}
where is $\pi_4(l.o.t.)$ is a polynomial (without constant term) in $O\left(e^{-2\gamma\tau} b\psi/ (c\bar\lambda) \right)$, $O\left(e^{-2\gamma\tau} b\psi'/(c\bar\lambda') \right)$, and $O\left(e^{-2\gamma\tau} b\psi''/ (c\bar\lambda'')  \right)$, and where 
\begin{align}\label{eq:estimate-pi4}
 |\pi_4(l.o.t.)| \leq M_4 \phi^{-2} e^{-2\gamma\tau}
\end{align}
with $M_4=M_4(b)$ a constant depending (polynomially) only on $b$, which is to be specified. It then follows that
\begin{align}
\label{eq:R2}
M_4\phi^{-2}e^{-2\gamma\tau} & \leq R_2^{-2} M_4;
\end{align}
hence for $R_2e^{-\gamma\tau}\leq \phi<\delta$ (note that by picking $\tau_2$ to be larger if necessary, we always have $R_2e^{-\gamma\tau}<\phi<\delta$ for all $\tau\geq\tau_2$) and for $\tau\geq\tau_2$, we obtain
\begin{align*}
e^{2\gamma\tau}\mathcal{F}_\phi[\lambda^+_{ext}] & = II + b \Lambda(\phi) \\
& > \Lambda(\phi) \left\{ b +  c^3 \left( 1 + R_2^{-2} M_4 \right) \right\} \\
& > 0
\end{align*}
for any $b$ satisfying $b < -c^3 \left( 1 + R_2^{-2} M_4 \right)$.

Therefore, there exists $b^{+} \leq \min \{-2(c^{+})^3, -(c^{+})^3( 1 + R_2^{-2} M_4)\} < 0$ such that $\lambda^{+}_{ext}$ is a supersolution of $\mathcal{F}_\phi[\tilde\lambda]=0$ on the interval $R_2 e^{-\gamma\tau} \leq \phi \leq \sqrt{2(n-1)}$ for all $\tau\geq\tau_2$.

By a similar argument, there exists $b^{-} \geq \max \{-(c^{+})^3/2, -(c^{+})^3( 1 - R_2^{-2} M_4)\}$ such that  $\lambda^{-}_{ext}$ is a subsolution of $\mathcal{F}_\phi[\tilde\lambda]=0$ on the interval $R_2 e^{-\gamma\tau} < \phi < \sqrt{2(n-1)}$ for all $\tau\geq\tau_2$.

The lemma is now proven.

\end{proof}

\begin{remark}
It follows from the proof of Lemma \ref{exterior-supersub} that we can pick $b^- > 0$.
\end{remark}

%%%%%%%%%%%%%%%%%%%%%%%%%%%%%%%%%

%%%%%%%%%%%%%%%%%%%%%%%%%%%%%%%%%

\section{Upper and lower barriers}\label{barriers} 

According to Lemmata \ref{interior-supersub} and \ref{exterior-supersub}, if we choose $R_2<R_1$, then there will be a region where both $\lambda^{\pm}_{int}$ and $\lambda^{\pm}_{ext}$ are defined. To show that the regional supersolutions $\lambda^+_{ext}$ and $\lambda^+_{int}$ together with the regional subsolutions $\lambda^-_{ext}$ and $\lambda^-_{int}$ collectively provide upper and lower barriers for our mean curvature flow problem, we need to show two things:
\begin{itemize}
\item[(i)] in each region, the inequalities $\lambda ^-_{int}\le \lambda ^+_{int}$ and $\lambda ^-_{ext}\le \lambda ^+_{ext}$ hold;
\item[(ii)] that $\lambda ^+_{int}$ and $\lambda ^+_{ext}$ patch together, as do $\lambda ^-_{int}$ and $\lambda ^-_{ext}$.
\end{itemize}

We first prove the regional inequalities (i), via the following two lemmata: 

\begin{lemma}\label{int-order}
For $A^{-}>A^{+}$, there exists $\tau_3\geq\tau_1$ such that
\begin{align*}
 \lambda^{\pm}_{int} := -A^{\pm} + e^{-2\gamma\tau}F(z) + B^{\pm}(\tau) e^{-2\gamma\tau} + D^{\pm} \tau e^{-4\gamma\tau}Q^{\pm}(z)
\end{align*}
satisfy $\lambda^{-}_{int} < \lambda^{+}_{int}$ for $0< |z| <R_1$ and for $\tau\geq\tau_3$.
\end{lemma}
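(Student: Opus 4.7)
\medskip

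\noindent\textbf{Proof proposal.} The strategy is a straightforward dominant-term argument: the constant gap $A^- - A^+ > 0$ absorbs every other term in the difference $\lambda^+_{int} - \lambda^-_{int}$, each of which carries an exponentially decaying factor in $\tau$. First I would form the difference
\begin{align*}
\lambda^+_{int}(z,\tau) - \lambda^-_{int}(z,\tau)
&= (A^- - A^+) + e^{-2\gamma\tau}\bigl[F^+(z) - F^-(z)\bigr] \\
&\quad + e^{-2\gamma\tau}\bigl[(B^+ - B^-)\tau + (E^+ - E^-)\bigr] \\
&\quad + \tau e^{-4\gamma\tau}\bigl[D^+ Q^+(z) - D^- Q^-(z)\bigr].
\end{align*}
The first summand is a \emph{positive constant}; the rest is an error term that needs to be shown to be small for large~$\tau$.

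Next I would bound the error term uniformly on $0 \le |z| \le R_1$. Because $F^{\pm}$ and $Q^{\pm}$ are continuous functions defined by the ODE initial value problems \eqref{eq:ODE-F} and \eqref{eq:ODE-Q}, they are bounded on the compact interval $[-R_1, R_1]$; likewise the constants $B^{\pm}, E^{\pm}, D^{\pm}$ produced by Lemma \ref{interior-supersub} are fixed numbers. Hence there is a constant $K = K(A^{\pm}, B^{\pm}, E^{\pm}, D^{\pm}, R_1, n, \gamma) < \infty$ such that the absolute value of the error term is bounded above by $K(1+\tau)e^{-2\gamma\tau}$ on $[-R_1, R_1]\times[\tau_1,\infty)$.

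Finally, since $\gamma > 1/2 > 0$, we have $(1+\tau)e^{-2\gamma\tau} \to 0$ as $\tau \to \infty$, so there exists $\tau_3 \ge \tau_1$ for which $K(1+\tau)e^{-2\gamma\tau} \le \tfrac{1}{2}(A^- - A^+)$ whenever $\tau \ge \tau_3$. For such $\tau$ we conclude
\begin{align*}
\lambda^+_{int}(z,\tau) - \lambda^-_{int}(z,\tau) \ge \tfrac{1}{2}(A^- - A^+) > 0
\end{align*}
for all $0 < |z| < R_1$, which is exactly the claim. There is no real obstacle in the argument; the only point requiring care is verifying that the constants $D^{\pm}$ produced by Lemma \ref{interior-supersub} do not themselves depend on $\tau$, so that the polynomial-times-exponential decay genuinely beats the constant gap. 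Since Lemma \ref{interior-supersub} explicitly states that $D^{\pm}$ depends only on $n, \gamma, A^{\pm}, B^{\pm}$, and $R_1$, this is automatic.
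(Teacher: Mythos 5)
Your proof is correct and takes essentially the same approach as the paper: identify the positive constant gap $A^- - A^+$, bound the remaining terms by a quantity of the form $O(\tau e^{-2\gamma\tau})$ uniformly on $|z| \le R_1$ (using boundedness of $F^{\pm}$ and $Q^{\pm}$ on the compact interval), and observe that this decays to zero as $\tau \to \infty$ so that for $\tau \ge \tau_3$ sufficiently large the gap dominates. Your added care about $F^+$ versus $F^-$ and about $D^{\pm}$ being $\tau$-independent is consistent with (and slightly more explicit than) the paper's brief proof.
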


\begin{proof}
Since $Q^{\pm}$ are bounded on $0< |z| \leq R_1$, if $A^{-}>A^{+}$, then it follows from the above expressions that 
\begin{align*}
 \lambda^{+}_{int} - \lambda^{-}_{int} & = A^{-} - A^{+} +  (B^{+}(\tau)-B^{-}(\tau)) e^{-2\gamma\tau} + (E^{+} - E^{-})e^{-2\gamma\tau} + O(\tau e^{-4\gamma\tau}) \\
 & = A^{-} - A^{+} + O(\tau e^{-2\gamma\tau})\\
 & > 0 
\end{align*}
for $\tau\geq \tau_3$ sufficiently large (larger than $\tau_1$ if necessary) and for $0< |z| <R_1$.
\end{proof}

\begin{lemma}\label{ext-order}
 For $c^{-}>c^{+}$, there exists $\tau_4\geq\tau_1$ such that
 \begin{align*}
  \lambda^{\pm}_{ext} & =  -c^{\pm} \bar\lambda(\phi) + b^{\pm} e^{-2\gamma\tau}\psi(\phi)
 \end{align*}
 (see Lemma \ref{exterior-supersub}) satisfy $\lambda^{-}_{ext}\leq \lambda^{+}_{ext}$ for $R_2e^{-\gamma\tau}\leq |\phi| <\sqrt{2(n-1)}$ and for $\tau\geq\tau_4$.
\end{lemma}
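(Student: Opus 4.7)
The plan is to mirror the argument of Lemma \ref{int-order}: show that the gap $\lambda^+_{ext} - \lambda^-_{ext}$ is dominated by the order-one contribution from $(c^- - c^+)\bar\lambda(\phi)$, while the $e^{-2\gamma\tau}\psi(\phi)$ correction is negligible for large $\tau$. Subtracting the two expressions in \eqref{eq:ext-supersub} and using that $\bar\lambda(\phi) = (2n-2-\phi^2)^{\gamma-1/2} > 0$ on $|\phi| < \sqrt{2(n-1)}$, I would factor
\begin{align*}
 \lambda^+_{ext} - \lambda^-_{ext} & = (c^- - c^+)\bar\lambda(\phi) + (b^+ - b^-) e^{-2\gamma\tau}\psi(\phi)\\
 & = \bar\lambda(\phi)\left\{ (c^- - c^+) + (b^+ - b^-) e^{-2\gamma\tau} \frac{\psi(\phi)}{\bar\lambda(\phi)} \right\}.
\end{align*}

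The next step is to bound $|\psi/\bar\lambda|$ uniformly on the exterior region $R_2 e^{-\gamma\tau} \le |\phi| < \sqrt{2(n-1)}$. The asymptotic information is essentially already extracted in Lemma \ref{exterior-supersub}: as $\phi \nearrow \sqrt{2(n-1)}$ the ratio behaves like $(2n-2-\phi^2)^{2\gamma-1}$ and is uniformly bounded by some $M_1$; near $\phi=0$, however, the crude bound $|\psi/\bar\lambda| \le M_3 \phi^{-2}$ quoted in \eqref{eq:bound-M2} is too weak to close the argument, since at $\phi = R_2 e^{-\gamma\tau}$ it would exactly cancel the $e^{-2\gamma\tau}$ prefactor. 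I would therefore re-extract the refined behavior directly from the explicit formula \eqref{eq:soln-psi} together with \eqref{eq:barlambda}, obtaining $\psi/\bar\lambda = K_0 \log(\phi^2) + K_1 + O(\phi^2 \log \phi^2)$ as $\phi \searrow 0$ for constants $K_0,K_1$ depending only on $n$, $\gamma$, and $C_1$. Since $|\log \phi^2| \le 2\gamma\tau + 2|\log R_2|$ throughout $\phi \ge R_2 e^{-\gamma\tau}$, there is a constant $K = K(n,\gamma,R_2,C_1)$ such that $|\psi(\phi)/\bar\lambda(\phi)| \le K(1+\tau)$ on the whole region.

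Plugging this bound into the bracketed expression yields
\begin{align*}
 \left| (b^+ - b^-) e^{-2\gamma\tau} \frac{\psi(\phi)}{\bar\lambda(\phi)} \right| \le |b^+ - b^-|\, K (1+\tau)\, e^{-2\gamma\tau},
\end{align*}
which tends to $0$ as $\tau \to \infty$. Choosing $\tau_4 \ge \tau_2$ large enough that the right-hand side is at most $(c^- - c^+)/2$ for all $\tau \ge \tau_4$, the bracketed expression is bounded below by $(c^- - c^+)/2 > 0$; since $\bar\lambda > 0$, the conclusion $\lambda^-_{ext} \le \lambda^+_{ext}$ follows. The only subtlety is the refined logarithmic estimate near $\phi = 0$ just described; everything else is a routine large-$\tau$ comparison.
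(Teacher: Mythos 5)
Your proof is correct, and it essentially carries out the same comparison as the paper's, but you organize it more cleanly by factoring out $\bar\lambda$ and bounding the \emph{ratio} $\psi/\bar\lambda$ rather than $\psi$ alone. This is more than a cosmetic improvement: near $\phi\nearrow\sqrt{2(n-1)}$ the quantity $\bar\lambda(\phi)=(2n-2-\phi^2)^{\gamma-1/2}$ itself tends to zero, so an estimate of the correction of the bare form $O(e^{-2\gamma\tau})$ (uniform in $\phi$) does not by itself yield positivity of the gap on the whole interval. What saves the argument is precisely that $\psi/\bar\lambda\to 0$ as $\phi\nearrow\sqrt{2(n-1)}$ (since $\psi/\bar\lambda\sim(2n-2-\phi^2)^{2\gamma-1}$ and $2\gamma-1>0$), so the correction vanishes \emph{faster} than $(c^--c^+)\bar\lambda$ at the cylinder end. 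Your factorization makes this explicit and gives a conclusion that is genuinely uniform in $\phi$. The paper's written version splits the interval at $\delta$ and asserts the result loosely with absolute $O$-notation, leaving the reader to supply the relative comparison; your version supplies it.

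You are also right to discard the crude bound $|\psi/\bar\lambda|\le M_3\phi^{-2}$ near $\phi=0$ (which would exactly cancel the $e^{-2\gamma\tau}$ prefactor at $\phi=R_2e^{-\gamma\tau}$) in favor of the sharper logarithmic asymptotics of \eqref{eq:soln-psi}; this gives $|\psi/\bar\lambda|\le K(1+\tau)$ on the whole region, matching the paper's $O(\tau e^{-2\gamma\tau})$ inner-region estimate. The rest of your argument, choosing $\tau_4$ so that $|b^+-b^-|K(1+\tau)e^{-2\gamma\tau}\le(c^--c^+)/2$ for $\tau\ge\tau_4$, closes the argument as intended.
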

\begin{proof}
%Recall the definitions of $\bar\lambda$ and $\psi$ (cf. Lemma \ref{exterior-supersub}). If $c^{+}<c^{-}$, then
Based on the formulas \eqref{eq:ext-supersub} for $\lambda ^+_{ext}$ and $\lambda^-_{ext}$, we have 
\begin{align*}
 \lambda^{+}_{ext} - \lambda^{-}_{ext} & = (c^{-} - c^{+})\bar{\lambda} + (b^+ - b^-) e^{-2\gamma\tau} \psi.
\end{align*}
Then using the expressions \eqref{eq:barlambda} for $\bar\lambda$ and \eqref{eq:soln-psi} for $\psi$, we see that if $|\phi|\in[\delta, \sqrt{2(n-1)})$, then $\psi$ is uniformly bounded, and consequently
\begin{align*}
 \lambda^{+}_{ext} - \lambda^{-}_{ext}  &= (c^{-} - c^{+})\bar{\lambda} + O\left(e^{-2\gamma\tau}\right)\\
  & > 0
\end{align*}
for all sufficiently large $\tau$. If $|\phi|\in [R_2 e^{-\gamma\tau}, \delta]$, then the definition \eqref{eq:soln-psi} of $\psi$ implies that, since $c^->c^+$,
\begin{align*}
 \lambda^{+}_{ext} - \lambda^{-}_{ext}  &= (c^{-} - c^{+})\bar{\lambda} + O\left(\tau e^{-2\gamma\tau}\right)\\
  & > 0
\end{align*}
for all sufficiently large $\tau$. The lemma then follows.
\end{proof}

Recall that Lemma \ref{interior-supersub} holds for any $R_1>0$ and Lemma \ref{exterior-supersub} holds for any $R_2>0$. Below, we  choose $R_2<R_1<<1$ and patch together $\lambda ^+_{int}$ and $\lambda ^+_{ext}$, and $\lambda ^-_{int}$ and $\lambda ^-_{ext}$ in the region defined by $\{ R_2 < z < R_1 \}$. To this end, we need the following lemma:

\begin{lemma}\label{patch}
For a fixed integer $n\geq 2$ and a fixed real number $\gamma > 1/2$, set \\ $\tilde\beta := \left[2(n-1)\right]^{3(\gamma-1/2)}/(2\gamma-1)$ (note that this quantity is postive), let $\lambda^{+}_{int}$ and $\lambda^{-}_{int}$ be as defined in Lemma \ref{int-order}, and let $\lambda^{+}_{ext}$ and $\lambda^{-}_{ext}$ be as defined in Lemma \ref{ext-order}. 
If we choose the constants $A^{\pm}, B^{\pm}, b^{\pm} $ and $c^{\pm}$ satisfying the following relations:
\begin{align}
 A^{\pm}& = c^{\pm}(2n-2)^{\gamma-1/2}>0, \\
 B^{\pm} & = - 2\gamma b^{\pm}\tilde\beta,
\end{align}
and if in particular  we choose $b^{-}>0$, then there exist constants $R_*$ and $R^*$ with \\ $0< R_2 < R_* < R^* < R_1 \ll 1$  such that the following inequalities hold at $z=R_*$ and $z=R^{*}$ for all $\tau\geq \tau_5:=\max\{\tau_3,\tau_4\}$: 
\begin{align}
\lambda^{+}_{int}(R_*,\tau) & < \lambda^{+}_{ext} (R_*,\tau), \quad 
\lambda^{+}_{int}(R^*,\tau)  > \lambda^{+}_{ext} (R^*,\tau), \label{eq:patchupper}\\
\lambda^{-}_{int}(R_*,\tau) & > \lambda^{-}_{ext}(R_*,\tau), \quad
\lambda^{-}_{int}(R^*,\tau) < \lambda^{-}_{ext}(R^*,\tau). \label{eq:patchlower}
\end{align}
Carrying out an even extension (in $z$) for these quantities, we find that corresponding inequalities hold at $z=-R_*$ and $z=-R^*$  for all $\tau\geq\tau_5$.

\end{lemma}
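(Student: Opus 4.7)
The plan is to re-express both families of barriers as functions of the variable $z$ on the overlap region $R_2\le |z|\le R_1$ (using $\phi = ze^{-\gamma\tau}$), verify that the prescribed relations among the constants make the top-order terms cancel, and then exploit a residual $\log z$ dependence to locate the required sign changes via the intermediate value theorem. I would treat $z\ge 0$ first; the $z<0$ half follows by even extension.

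First I would substitute $\phi = ze^{-\gamma\tau}$ into $\lambda^\pm_{ext}$ and Taylor expand in $\phi$. The expansion
\[
\bar\lambda(\phi) = [2(n-1)]^{\gamma-1/2} - (\gamma-\tfrac12)[2(n-1)]^{\gamma-3/2}\phi^2 + O(\phi^4)
\]
shows that the relation $A^\pm = c^\pm[2(n-1)]^{\gamma-1/2}$ makes the $O(1)$ constant of $\lambda^\pm_{ext}$ agree with the $-A^\pm$ of $\lambda^\pm_{int}$. The explicit formula \eqref{eq:soln-psi} for $\psi$, combined with the identity $[2(n-1)]^{3\gamma-1/2}/(n-1) = 2[2(n-1)]^{3\gamma-3/2}$, yields
\[
\psi(\phi) = K + 2\tilde\beta\log\phi + O(\phi^2\log\phi), \qquad \phi\searrow 0,
\]
for a constant $K=K(n,\gamma,C_1)$. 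Writing $\log\phi = \log z - \gamma\tau$, the $\tau$-linear part of $b^\pm e^{-2\gamma\tau}\psi$ becomes $-2\gamma\tilde\beta b^\pm\tau e^{-2\gamma\tau}$, and the relation $B^\pm = -2\gamma b^\pm\tilde\beta$ precisely cancels it against the $B^\pm \tau e^{-2\gamma\tau}$ term in $\lambda^\pm_{int}$. Because $F^\pm$ and $Q^\pm$ are bounded on $[0,R_1]$ and $\tau e^{-4\gamma\tau}D^\pm Q^\pm$ is strictly subordinate to $e^{-2\gamma\tau}$, the leftover reads
\[
\lambda^\pm_{int}(z,\tau) - \lambda^\pm_{ext}(ze^{-\gamma\tau},\tau) = e^{-2\gamma\tau}\,g^\pm(z) + o(e^{-2\gamma\tau}),
\]
with
\[
g^\pm(z) := F^\pm(z) - \frac{A^\pm(\gamma-\tfrac12)}{2(n-1)}z^2 - 2\tilde\beta b^\pm \log z + (E^\pm - b^\pm K).
\]

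Because $b^+<0$ (from Lemma \ref{exterior-supersub}) while $b^->0$ (by hypothesis), the $\log z$ slopes of $g^+$ and $g^-$ are strictly positive and strictly negative respectively, so $g^+(z)\to -\infty$ and $g^-(z)\to +\infty$ as $z\searrow 0$. Since $E^\pm$ are still free parameters in Lemma \ref{interior-supersub}, I would choose $E^+$ large and $E^-$ very negative so that $g^+(R_1)>0$ and $g^-(R_1)<0$ with a definite margin; the intermediate value theorem then produces zeros $z^\pm\in(R_2,R_1)$ of $g^\pm$. Selecting $R_*\in(R_2,\min\{z^+,z^-\})$ and $R^*\in(\max\{z^+,z^-\},R_1)$, both still inside $(R_2,R_1)$, yields the strict sign pattern $g^+(R_*)<0$, $g^+(R^*)>0$, $g^-(R_*)>0$, $g^-(R^*)<0$. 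Taking $\tau_5\ge\max\{\tau_3,\tau_4\}$ large enough absorbs the $o(e^{-2\gamma\tau})$ error, and the four inequalities \eqref{eq:patchupper}--\eqref{eq:patchlower} follow; the even extension delivers the analogues at $z=-R_*,-R^*$. The main technical obstacle lies in the first step: verifying that the coefficient of $\log\phi$ in $\psi$ equals exactly $2\tilde\beta$, because that is precisely what justifies the algebraic relation $B^\pm = -2\gamma b^\pm\tilde\beta$ and isolates the $\log z$ residual whose sign drives the entire patching argument.
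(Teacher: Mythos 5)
Your proposal follows essentially the same strategy as the paper's proof: substitute $\phi = z e^{-\gamma\tau}$, verify that the relations $A^\pm = c^\pm(2n-2)^{\gamma-1/2}$ and $B^\pm = -2\gamma b^\pm\tilde\beta$ cancel the $O(1)$ and $\tau e^{-2\gamma\tau}$ terms, isolate the residual $\log z$ term at order $e^{-2\gamma\tau}$ whose sign is governed by $-b^\pm$, and then invoke the free additive constants $E^\pm$ together with the intermediate value theorem to locate the crossover points inside $(R_2,R_1)$. Your computation of the $\log\phi$ coefficient of $\psi$ as $2\tilde\beta$ is the correct bookkeeping (this is the identity $\frac{(2n-2)^{3\gamma-1/2}}{2(n-1)(2\gamma-1)}=\tilde\beta$ applied to $\log(\phi^2)=2\log\phi$), and it is indeed exactly what makes the relation $B^\pm = -2\gamma b^\pm\tilde\beta$ effect the cancellation.

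One small imprecision worth tightening: saying \emph{``choose $E^+$ large and $E^-$ very negative so that $g^+(R_1)>0$ and $g^-(R_1)<0$''} only places the zero of $g^\pm$ somewhere in $(0,R_1)$; to ensure it lies in $(R_2,R_1)$ rather than in $(0,R_2]$, you must also control the sign at $R_2$, which can force $E^+$ not to be too large. The correct formulation is that the zero $z_0(E^+)$ of $g^+$ moves continuously and monotonically with $E^+$, so one tunes $E^+$ (not simply ``chooses it large'') to land $z_0$ in the open interval $(R_2,R_1)$; the paper phrases it as sliding the graph vertically so that the first crossing lies in $(R_2,R_1)$. With that adjustment your argument matches the paper's proof.
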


\begin{proof}
We detail the proof for $z, \phi \in (0, 2\sqrt{(n-1)})$; the proof for $z, \phi \in (-2\sqrt{(n-1)}, 0)$ is identical.

We first prove the inequalities in \eqref{eq:patchupper}.

In the interior region, using the asymptotic expansion of $F(z)$ in \eqref{eq:asymp-F}, we have that as $z\searrow 0$,
\begin{align*}
 \lambda^{+}_{int} & = - A^{+} + B^{+}\tau e^{-2\gamma\tau} + e^{-2\gamma\tau}\left[\frac{(\gamma-1/2)A^{+}}{2(n-1)}z^2 + E^{+} + o(z^2)\right] + D^{+} \tau e^{-4\gamma\tau}Q^{+}(z).
\end{align*}
In the exterior region, using the asymptotic expansion that readily follows from the explicit expression for $\psi(\phi)$ in \eqref{eq:soln-psi}, we have that as $\phi\searrow 0$, 
\begin{align*}
 \lambda^{+}_{{ext}} & = -c^{+}[2n-2-z^2e^{-4\gamma\tau}]^{\gamma-1/2} \\
  & \quad + b^{+}e^{-2\gamma\tau}\left[\tilde\beta\log|ze^{-2\gamma\tau}| + d + O\left((ze^{-2\gamma\tau})^2\right)\log|ze^{-2\gamma\tau}| + O\left((ze^{-2\gamma\tau})^2\right)\right]\\
  & = -c^{+}(2n-2)^{\gamma-1/2} + (-2\gamma)b^+ \tau e^{-2\gamma\tau}\left[\tilde\beta + O\left(z^2e^{-4\gamma\tau}\right)\right]\\
  & \quad + b^+ e^{-2\gamma\tau} \left[\tilde\beta\log|z| + d + O\left((ze^{-2\gamma\tau})^2\right)\log|z| + O\left((ze^{-2\gamma\tau})^2\right)\right],
\end{align*}
where $\tilde{\beta}$ (defined above as $\left[2(n-1)\right]^{3(\gamma-1/2)}/(2\gamma-1)$) and $d$ are constants which depend only on $n$ and $\gamma$. We thus have 
\begin{align*}
 \lambda^{+}_{ext} & = -c^{+}(2n-2)^{\gamma-1/2} - (2\gamma b^+\tilde\beta)\tau e^{-2\gamma\tau} + b^+ e^{-2\gamma\tau} \left[\tilde\beta\log|z| + O(1)\right] + O(e^{-4\gamma\tau}). 
\end{align*}
It then follows that
\begin{align*}
 \lambda^{+}_{int} - \lambda^{+}_{ext} & = \left( - A^{+} + c^{+}(2n-2)^{\gamma-1/2} \right) + (B^+ + 2\gamma b^+\tilde\beta)\tau e^{-2\gamma\tau}\\
  & \quad + e^{-2\gamma\tau}\left[\frac{(\gamma-1/2)A^+}{2(n-1)}z^2 - b^+\tilde\beta \log|z| + E^{+} + d + o(z^2) \right]\\
  & \quad + O(\tau e^{-4\gamma\tau}).
\end{align*}
We now recall that our choices of $A^+$ and $c^+$, and of $b^{+}$ and $B^{+}$, are required by hypothesis to  satisfy the following relations:
\begin{align}
 A^{+}& = c^{+}(2n-2)^{\gamma-1/2}, \label{eq:Acplus}\\
 B^+ & = - 2\gamma b^+\tilde\beta.
\end{align} 
We then have 
\begin{align*}
e^{2\gamma\tau}(\lambda^{+}_{int} - \lambda^{+}_{ext}) & = \frac{(\gamma-1/2)A^+}{2(n-1)}z^2 - b^+\tilde\beta \log|z| + E^{+} + d + o(z^2) +  O(\tau e^{-2\gamma\tau}); 
\end{align*}
here we note that for sufficiently large $\tau$, $\left\vert O(\tau e^{-2\gamma\tau})\right\vert << 1$. 

We now make the following observations regarding $\lambda^{+}_{int}$ and  $\lambda^{+}_{ext}$
for any fixed value of $\tau\geq\tau_5$:
\begin{enumerate}
 \item the function $e^{2\gamma\tau}(\lambda^{+}_{int} - \lambda^{+}_{ext})$ is smooth in $z$ and increasing in $z$ if $z$ is small;
 \item since $b^{+}<0$ (cf. the proof of Lemma \ref{exterior-supersub}), it follows that
$\limt{z}{0^+} e^{2\gamma\tau}(\lambda^{+}_{int} - \lambda^{+}_{ext}) = -\infty$ and $\limt{z}{\infty} e^{2\gamma\tau}(\lambda^{+}_{int} - \lambda^{+}_{ext}) = +\infty$; consequently, there exists $z_0$ at which the graph of $e^{2\gamma\tau}(\lambda^{+}_{int} - \lambda^{+}_{ext})$ crosses the $z$-axis for the first time;
 \item since $E^{+}$ is an arbitrary constant (cf. Lemma \ref{interior-supersub}), and since increasing the value of $E^{+}$ slides the graph of $e^{2\gamma\tau}(\lambda^{+}_{int} - \lambda^{+}_{ext})$ upward and shifts $z_0$ towards $0$, we can choose $E^{+}$ so that $z_0\in(R_2, R_1)$.
\end{enumerate}
Thus, there exist $R_3 > 0$ and $K>1$ such that $R_2<R_3<KR_3<R_1<<1$ and such that the following inequalities hold for all $\tau\geq\tau_5$:
\begin{align*}
 e^{2\gamma\tau}(\lambda^{+}_{int} - \lambda^{+}_{ext})(R_3) & < 0, \\
 e^{2\gamma\tau}(\lambda^{+}_{int} - \lambda^{+}_{ext}) (KR_3)& > 0.
\end{align*}

We have thus established the inequalities \eqref{eq:patchupper}. We now proceed to establish the inequalities \eqref{eq:patchlower}:

Using the asymptotic expansion of $F(z)$ as $z\searrow 0$ and that of $\psi(\phi)$ as $\phi\searrow 0$, we obtain
\begin{align*}
 \lambda^{-}_{int} - \lambda^{-}_{ext} & = \left( - A^{-} + c^{-}(2n-2)^{\gamma-1/2} \right) + (B^- + 2\gamma b^-\tilde\beta)\tau e^{-2\gamma\tau}\\
  & \quad + e^{-2\gamma\tau}\left[\frac{(\gamma-1/2)A^-}{2(n-1)}z^2 - b^-\tilde\beta \log|z| + E^{-} + d + o(z^2) \right]\\
  & \quad + O(\tau e^{-4\gamma\tau}).
\end{align*}
Since our choices of  $A^-$ and $c^-$, and $b^{-}$ and $B^{-}$ are required by hypothesis to satisfy
\begin{align}
 A^{-}& = c^{-}(2n-2)^{\gamma-1/2}, \label{eq:Acminus}\\
 B^{-} & = - 2\gamma b^{-}\tilde\beta,
\end{align} 
we determine that
\begin{align*}
e^{2\gamma\tau}(\lambda^{-}_{int} - \lambda^{-}_{ext}) & = \frac{(\gamma-1/2)A^-}{2(n-1)}z^2 - b^-\tilde\beta \log|z| + E^{-} + d + o(z^2) +  O(\tau e^{-2\gamma\tau}),
\end{align*}
where for sufficiently large $\tau$, $\left\vert O(\tau e^{-2\gamma\tau})\right\vert << 1$. We are led to make the following observations for any fixed value of  $\tau\geq\tau_5$:
\begin{enumerate}
 \item the function $e^{2\gamma\tau}(\lambda^{-}_{int} - \lambda^{-}_{ext})$ is smooth in $z$ and increasing in $z$ if $z$ is small;
 \item since $b^{-}>0$ (cf. the proof of Lemma \ref{exterior-supersub}), it follows that  $\limt{z}{0^+} e^{2\gamma\tau}(\lambda^{-}_{int} - \lambda^{-}_{ext}) = +\infty$ and $\limt{z}{\infty} e^{2\gamma\tau}(\lambda^{-}_{int} - \lambda^{-}_{ext}) = -\infty$; consequently there exists $z_1$ at which  the graph of $e^{2\gamma\tau}(\lambda^{-}_{int} - \lambda^{-}_{ext})$ crosses the $z$-axis for the first time;
 \item since  $E^{-}$ is an arbitrary constant (cf. Lemma \ref{interior-supersub}), and since decreasing the value of $E^{-}$  slides the graph of $e^{2\gamma\tau}(\lambda^{-}_{int} - \lambda^{-}_{ext})$ downward and shifts $z_1$ towards $0$, we can choose $E^{-}$ such that $z_1\in(R_2, R_1)$.
\end{enumerate}
Thus, there exist $R_4 > 0$ and $L>1$ such that $R_2<R_4<LR_4<R_1<<1$ and such that the following inequalities hold for all $\tau\geq\tau_5$:
\begin{align*}
 e^{2\gamma\tau}(\lambda^{-}_{int} - \lambda^{-}_{ext}) (R_4) & > 0,\\
 e^{2\gamma\tau}(\lambda^{-}_{int} - \lambda^{-}_{ext}) (L R_4) & < 0.
\end{align*}

If we now set $R_* := \min\{ R_3, R_4 \}$ and $R^* := \max\{ K R_3, L R_4 \}$, the proof of this lemma is complete.
\end{proof}

\begin{remark}
The choices of $A^{\pm}$ and of $c^{\pm}$ in Lemma \ref{patch} guarantee that Lemmata \ref{int-order} and \ref{ext-order} both hold.
\end{remark}

We now patch the regional sub and super solutions, thereby producing global sub and super solutions, which are consequently upper and lower barriers: For $|\phi|\in[0,\sqrt{2(n-1)})$ and for $\tau\geq\tau_5$, we define $\lambda^{+}:=\lambda^{+}(\phi,\tau)$ by
\begin{align}\label{eq:upperbarrier}
 \lambda^{+} : = \left\{
  \begin{array}{cc}
   \lambda^{+}_{int}, & |\phi|\leq R_* e^{-\gamma\tau} \vspace{6pt} \\ 
   \inf\left\{\lambda^{+}_{int}, \lambda^{+}_{ext} \right\}, & R_* e^{-\gamma\tau}\leq |\phi| \leq R^* e^{-\gamma\tau} \vspace{6pt}\\
   \lambda^{+}_{ext} , & R^* e^{-\gamma\tau} \leq |\phi| < \sqrt{2(n-1)}
  \end{array}
 \right.,
\end{align}
and we define
$\lambda^{-}:=\lambda^{-}(\phi,\tau)$ by
\begin{align}\label{eq:lowerbarrier}
 \lambda^{-} : = \left\{
  \begin{array}{cc}
   \lambda^{-}_{int}, & |\phi|\leq R_* e^{-\gamma\tau} \vspace{6pt}\\
   \sup\left\{\lambda^{-}_{int}, \lambda^{-}_{ext} \right\}, & R_* e^{-\gamma\tau}\leq |\phi| \leq R^* e^{-\gamma\tau} \vspace{6pt}\\
   \lambda^{-}_{ext} , & R^* e^{-\gamma\tau} \leq |\phi| < \sqrt{2(n-1)}
  \end{array}
 \right..
\end{align}

The properties of the barriers $\lambda^{\pm}$ are summarized in the following Proposition.
\begin{prop}\label{prop-patch}
For a fixed  integer $n\geq 2$ and a fixed real number $\gamma > 1/2$, let $\lambda^{+}$ and $\lambda^{-}$ be defined as in \eqref{eq:upperbarrier} and \eqref{eq:lowerbarrier}, respectively. There exists a sufficiently large (but finite) $\tau_0$ such that the following hold true for $-\sqrt{2(n-1)}<\phi<\sqrt{2(n-1)}$ and for $\tau\geq\tau_0$:
\begin{enumerate}
\item[(B1)] $\lambda^{+}$ and $\lambda^{-}$ are super- ($+$) and sub- ($-$) solutions for equation \eqref{eq:lambda(phi,tau)}, resp., for \\ $-\sqrt{2(n-1)} <\phi<\sqrt{2(n-1)}$ and $\tau\geq\tau_0$. \\

\item[(B2)] $\lambda^{-} < \lambda^{+}$ for $(\phi,\tau)\in (-\sqrt{2(n-1)},\sqrt{2(n-1)})\times(\tau_0,\infty)$. \\
\item[(B3)] Near $\phi = 0$ one has $\lambda^{\pm} = \lambda^{\pm}_{int}$; and near $\phi=\sqrt{2(n-1)}$ one has $\lambda^{\pm} = \lambda^{\pm}_{ext}$. \\
\item[(B4)] For any $\tau\in[\tau_0,\infty)$, $\lim\limits_{|\phi|\nearrow \sqrt{2(n-1)}} \lambda^{\pm} = 0$.
\end{enumerate}
\end{prop}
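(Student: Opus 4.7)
The plan is to set $\tau_0 := \max\{\tau_1,\tau_2,\tau_3,\tau_4,\tau_5\}$, where the $\tau_i$ are the thresholds produced by the preceding five lemmata (with the parameters $A^\pm, B^\pm, E^\pm, c^\pm, b^\pm$ coupled as prescribed in Lemma \ref{patch}), and then to verify (B3) first, since it identifies which regional piece determines $\lambda^\pm$ at the two boundaries of the overlap band and thereby streamlines the remaining verifications. For (B3), the first inequality in \eqref{eq:patchupper} forces $\inf\{\lambda^+_{int},\lambda^+_{ext}\}=\lambda^+_{int}$ at $|\phi|=R_*e^{-\gamma\tau}$, and by continuity this equality persists on a one-sided neighborhood in $z=\phi e^{\gamma\tau}$; combined with the definition on $|\phi|\le R_*e^{-\gamma\tau}$ this yields $\lambda^+\equiv\lambda^+_{int}$ in a full neighborhood of $\phi=0$. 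The second inequality in \eqref{eq:patchupper} similarly gives $\lambda^+\equiv\lambda^+_{ext}$ for $|\phi|$ near $\sqrt{2(n-1)}$, and an identical argument using \eqref{eq:patchlower} handles $\lambda^-$.

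For (B1), on each of the three pieces in \eqref{eq:upperbarrier} the function $\lambda^+$ coincides with a smooth regional supersolution furnished by Lemmata \ref{interior-supersub} and \ref{exterior-supersub} (the equivalence of $\mathcal{T}_z[\lambda]\ge 0$ and $\mathcal{F}_\phi[\lambda]\ge 0$ under the change of variables $z=\phi e^{\gamma\tau}$ is a direct computation), so the supersolution inequality holds classically there. On the seam where $\lambda^+_{int}=\lambda^+_{ext}$ the function $\lambda^+$ is merely Lipschitz, and the main obstacle is to show that it nonetheless acts as a barrier for the quasilinear parabolic equation \eqref{eq:lambda(phi,tau)}. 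The standard resolution is that the pointwise minimum of two smooth supersolutions is a supersolution in the viscosity sense: any smooth test function touching $\lambda^+$ from below at the seam also touches the smaller of the two smooth pieces from below in a one-sided neighborhood, and therefore inherits the supersolution inequality from that piece. This is precisely the notion of barrier required by the comparison principle invoked in Section \ref{existence}; the subsolution property of $\lambda^-$ is the dual statement, using suprema.

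For (B2), in the pure interior region $|\phi|\le R_*e^{-\gamma\tau}$ the relations $A^\pm=c^\pm(2n-2)^{\gamma-1/2}$ from Lemma \ref{patch}, together with the choice $c^->c^+$, yield $A^->A^+$, whence Lemma \ref{int-order} gives $\lambda^-_{int}<\lambda^+_{int}$. In the pure exterior region, Lemma \ref{ext-order} gives $\lambda^-_{ext}<\lambda^+_{ext}$. In the overlap band, since $\lambda^+\ge\min\{\lambda^+_{int},\lambda^+_{ext}\}$ and $\lambda^-\le\max\{\lambda^-_{int},\lambda^-_{ext}\}$, it suffices to verify the cross comparisons $\lambda^+_{int}>\lambda^-_{ext}$ and $\lambda^+_{ext}>\lambda^-_{int}$; both follow from combining the two regional orderings with the patching inequalities \eqref{eq:patchupper}--\eqref{eq:patchlower}, using that the leading positive differences $A^--A^+$ and $c^--c^+$ dominate the $O(\tau e^{-2\gamma\tau})$ error terms once $\tau_0$ is enlarged if necessary.

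Finally, for (B4), by (B3) we have $\lambda^\pm=-c^\pm\bar\lambda(\phi)+b^\pm e^{-2\gamma\tau}\psi(\phi)$ for $|\phi|$ near $\sqrt{2(n-1)}$. Definition \eqref{eq:barlambda} and $\gamma-1/2>0$ give $\bar\lambda(\phi)\to 0$, while the explicit formula \eqref{eq:soln-psi} combined with $3\gamma-3/2>0$ gives $\psi(\phi)\to 0$, because the logarithmic factor is dominated by the polynomial prefactor $(2n-2-\phi^2)^{3\gamma-3/2}$. Hence $\lambda^\pm\to 0$ as claimed.
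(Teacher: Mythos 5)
Your proof is correct and follows the same approach as the paper, but supplies noticeably more detail than the paper's own one-line citation of Lemmata \ref{interior-supersub}--\ref{patch}; in particular, your explicit handling of the overlap band and of the Lipschitz seam is welcome. Two small points of calibration. First, for (B1), your viscosity-style argument (a test function touching the infimum from below at the seam also touches one of the smooth pieces from below) is a valid way to justify that the patched $\lambda^\pm$ act as barriers; the paper treats this slightly differently, not in the proof of Proposition \ref{prop-patch} itself but in the Remark following Proposition \ref{comparison}, by observing that the corners of $\lambda^+$ are ``convex'' (and those of $\lambda^-$ concave), so a smooth solution cannot first touch either barrier at a corner. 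These are essentially the same observation in two languages. Second, for (B2), your reduction to the cross comparisons $\lambda^+_{int}>\lambda^-_{ext}$ and $\lambda^+_{ext}>\lambda^-_{int}$ on the overlap band is the right thing to check, but these do not really follow from ``combining the regional orderings with the patching inequalities'' (the patching inequalities only control the endpoints $z=R_*,R^*$); rather, they follow by the same leading-order computation as in Lemmata \ref{int-order}--\ref{ext-order}, since the constant terms are $A^--A^+>0$ (equivalently $c^--c^+>0$ via $A^\pm=c^\pm(2n-2)^{\gamma-1/2}$), which dominate the $O(\tau e^{-2\gamma\tau})$ remainders for $\tau_0$ large -- and you do correctly identify this as the decisive estimate. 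For (B4), note that the second term of $\psi$ in \eqref{eq:soln-psi} carries the prefactor $(2n-2-\phi^2)^{3\gamma-1/2}$ rather than $(2n-2-\phi^2)^{3\gamma-3/2}$, but since $3\gamma-1/2>1>0$ when $\gamma>1/2$, your conclusion that the power dominates the logarithm and $\psi\to 0$ is still correct.
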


\begin{proof}
Let $\tau_0\geq \tau_5$. Condition (B1) follows from Lemmata \ref{exterior-supersub} and \ref{interior-supersub}. Condition (B2) follows from the definition of $\lambda^{\pm}$ and Lemmata \ref{int-order} and \ref{ext-order}. Condition (B3) follows from the definition of $\lambda^{\pm}$. Condition (B4) follows from the limit  $\lim\limits_{|\phi|\nearrow \sqrt{2(n-1)}} \lambda^{\pm}_{ext} = 0$. 
\end{proof}

We now prove a comparison principle for any pair of smooth functions such that one of them is a subsolution 
of equation $\mathcal{F}_\phi[\lambda] = 0$ (cf. \eqref{eq:lambda(phi,tau)}) and the other is a smooth supersolution of the same equation (they need not be related to the subsolution $\lambda^{-}$ or supersolution $\lambda^{+}$ constructed above).

\begin{prop}\label{comparison}{(Comparison principle for $\mathcal{F}_{\phi}[\lambda]=0$)}
For a fixed  integer $n\geq 2$, a fixed real number $\gamma > 1/2$, and some $\bar\tau\in(\tau_0, \infty)$, suppose that $\zeta^{+}$, $\zeta^{-}$ are any smooth non-positive super- and sub- solutions (not necessarily those constructed in Proposition \ref{prop-patch}) of the equation $\mathcal{F}_{\phi}[\lambda]=0$, respectively. Assume that
\begin{itemize}
 \item[(C1)] $\zeta^{-}(\phi,\tau_0) < \zeta^{+}(\phi,\tau_0)$ for $\phi\in(-\sqrt{2(n-1)},\sqrt{2(n-1)})$,\\
 
 \item[(C2)] $\zeta^{-}(-\sqrt{2(n-1)},\tau) \leq \zeta^{+}(-\sqrt{2(n-1)},\tau)$ for $\tau\in[\tau_0,\bar\tau]$,\\
 
 \item[(C3)] $\zeta^{-}(\sqrt{2(n-1)},\tau) \leq \zeta^{+}(\sqrt{2(n-1)},\tau)$ for $\tau\in[\tau_0,\bar\tau]$, 
\end{itemize}

Then $\zeta^{-}(\phi,\tau_0) \leq \zeta^{+}(\phi,\tau_0)$ for $(\phi,\tau)\in [-\sqrt{2(n-1)},\sqrt{2(n-1)}] \times [\tau_0,\bar\tau]$.
\end{prop}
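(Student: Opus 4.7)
The plan is to argue by contradiction via a first-touching argument, the standard approach for comparison principles for quasilinear parabolic equations once the inequality has been linearized. Set $w := \zeta^- - \zeta^+$ and suppose, towards a contradiction, that $w(\phi,\tau) > 0$ at some point of $[-\sqrt{2(n-1)},\sqrt{2(n-1)}]\times[\tau_0,\bar\tau]$. Define $\tau^* := \inf\{\tau\in(\tau_0,\bar\tau] : \max_\phi w(\phi,\tau) > 0\}$. The strict inequality (C1), together with continuity of $\zeta^\pm$, forces $\tau^* > \tau_0$, and by continuity $\max_\phi w(\phi,\tau^*) = 0$, attained at some $\phi^*$. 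The lateral conditions (C2) and (C3) then force $\phi^* \in (-\sqrt{2(n-1)},\sqrt{2(n-1)})$, so the touching takes place at an interior point.

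The key analytic step is to rewrite $\mathcal{F}_\phi[\zeta^-] - \mathcal{F}_\phi[\zeta^+] \leq 0$ as a linear parabolic inequality for $w$. This I would do by applying the integral form of the mean value theorem to the nonlinearity $H(\lambda,\lambda_\phi,\lambda_{\phi\phi}) := (\lambda_{\phi\phi} - 2\lambda_\phi^2/\lambda)/(1 + e^{2\gamma\tau}\lambda_\phi^2/\lambda^4)$ along the convex interpolation $\lambda_s := s\zeta^- + (1-s)\zeta^+$, $s\in[0,1]$. The result is an inequality
\[
w_\tau - \bar A(\phi,\tau)\, w_{\phi\phi} + \bar B(\phi,\tau)\, w_\phi + \bar C(\phi,\tau)\, w \leq 0,
\]
with $\bar A(\phi,\tau) = \int_0^1 [1 + e^{2\gamma\tau}(\lambda_s)_\phi^2/(\lambda_s)^4]^{-1}\,ds > 0$ wherever the interpolation stays away from zero, and with $\bar B, \bar C$ bounded on any compact subset on which $\lambda_s$ is bounded away from zero. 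Since $w \leq 0$ on $[-\sqrt{2(n-1)},\sqrt{2(n-1)}]\times[\tau_0,\tau^*]$ with $w(\phi^*,\tau^*) = 0$ attained at an interior point, I would apply the strong parabolic maximum principle---after the standard gauge transformation $w = e^{K\tau}\tilde w$ to absorb the sign of $\bar C$---on any slightly shrunken subrectangle $(-\sqrt{2(n-1)}+\delta, \sqrt{2(n-1)}-\delta)\times(\tau_0,\tau^*]$ containing $\phi^*$, concluding that $w \equiv 0$ there. Letting $\delta \to 0$ and invoking continuity then gives $w \equiv 0$ on $[-\sqrt{2(n-1)},\sqrt{2(n-1)}]\times[\tau_0,\tau^*]$, flatly contradicting (C1).

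The main obstacle is the degeneracy of $\bar A$ at the spatial boundary: as $|\phi|\nearrow\sqrt{2(n-1)}$ one has $\zeta^\pm\to 0$, so $(\lambda_s)^4 \to 0$ and $\bar A \to 0$, and uniform parabolicity fails on the full rectangle. The shrinking-domain step above is precisely what neutralizes this: since the touching point $\phi^*$ is strictly interior, working on $(-\sqrt{2(n-1)}+\delta, \sqrt{2(n-1)}-\delta)$ for small $\delta > 0$ puts us in a region where $\zeta^\pm$ are uniformly strictly negative, $\bar A$ admits a positive lower bound, and $\bar B, \bar C$ are uniformly bounded, so the classical linear parabolic strong maximum principle applies there without any modification.
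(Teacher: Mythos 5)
Your overall strategy---linearize the difference of the two quasilinear expressions via the integral mean value theorem along the convex interpolation, then apply the strong parabolic maximum principle on a spatially shrunken rectangle where the operator is uniformly parabolic---is a legitimate route and genuinely different from the paper's. The paper instead multiplies $\zeta^+-\zeta^-$ by an exponential gauge $e^{-\mu\tau}$, \emph{adds a positive constant} $\epsilon$, and runs the standard weak-MP first-touching computation, arriving at a pointwise contradiction $0 \geq \p_\tau v > 0$ at the first interior zero of $v:=e^{-\mu\tau}(\zeta^+-\zeta^-)+\epsilon$ once $\mu$ is taken large; no linearization and no strong maximum principle are invoked. Your approach uses heavier machinery but avoids the gauge bookkeeping, and both handle the spatial degeneracy by forcing the touching to happen away from $\phi=\pm\sqrt{2(n-1)}$.

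There is, however, a genuine gap at the step you state most quickly: that (C2) and (C3) ``force $\phi^*\in(-\sqrt{2(n-1)},\sqrt{2(n-1)})$.'' They do not. These conditions give only $\zeta^-\leq\zeta^+$, i.e.\ $w\leq 0$, at $\phi=\pm\sqrt{2(n-1)}$ --- a non-strict inequality --- and in the actual application the barriers satisfy $\lambda^\pm\to 0$ at the cylinder, so $w=0$ on the lateral boundary for all $\tau$. Hence the maximum $0$ of $w(\cdot,\tau^*)$ may be attained \emph{only} at the lateral boundary; in that case your shrunken rectangle contains no maximizer and the strong maximum principle has nothing to act on. The same issue undermines $\tau^*>\tau_0$: if $w\equiv 0$ on the lateral boundary, then $\max_\phi w(\cdot,\tau_0)=0$ already, and it is not immediate that the infimum defining $\tau^*$ exceeds $\tau_0$. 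The paper's additive $\epsilon$ is exactly the device that closes this hole --- it makes $v\geq\epsilon>0$ on the entire parabolic boundary, so the first zero of $v$ is forced strictly into the interior at a time strictly past $\tau_0$. To make your strong-MP route airtight you would need an analogous shift (applied before the gauge change, so that the shifted quantity still satisfies a usable differential inequality), or else a separate argument --- e.g.\ a Hopf-type boundary lemma, after carefully confronting the degeneration of $\bar A$ as $|\phi|\nearrow\sqrt{2(n-1)}$ --- ruling out the boundary-only touching scenario.
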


\begin{proof}
Let $\epsilon>0$ be arbitrary and define $v:= e^{-\mu \tau}(\zeta^+ - \zeta^{-}) + \epsilon$, where $\mu$ is to be determined.

We claim that $v\geq 0$ on $(\phi,\tau)\in [-\sqrt{2(n-1)},\sqrt{2(n-1)}] \times [\tau_0,\bar\tau]$. To prove this, suppose the contrary.  Then it follows from assumptions (C1)--(C3) that there must be a first time $\tau_*\in(\tau_0,\bar\tau)$ and an interior point $\phi_*\in(-\sqrt{2(n-1)},\sqrt{2(n-1)})$ such that
\begin{align*}
 v(\phi_*, \tau_*) = 0.
\end{align*}
Moreover, at $(\phi_*,\tau_*)$, we have
\begin{align*}
 \p_\tau\vert_{\phi} v  \leq 0, &  \quad\quad\quad \zeta^{+}_{\phi\phi}  \geq \zeta^{-}_{\phi\phi}, \\
 \zeta^{+}_{\phi}  = \zeta^{-}_{\phi}, & \quad\quad\quad \zeta^{+} - \zeta^{-}  =  -\epsilon e^{\mu \tau_*}.
\end{align*}
Consequently at $(\phi_*,\tau_*)$, we have
\begin{align*}
 0 & \geq e^{\mu \tau_*} \p_\tau\vert_{\phi} v \\
  & = \p_\tau\vert_{\phi} (\zeta^{+} - \zeta^{-})  - \mu \tau_* (\zeta^{+}-\zeta^{-})\\
  & \geq \frac{(\zeta^+_{\phi\phi}-2(\zeta^{+}_\phi)^2/\zeta^+)}{1+e^{2\gamma\tau_*}(\zeta^{+}_\phi)^2/(\zeta^{+})^4} - \frac{(\zeta^-_{\phi\phi}-2(\zeta^{-}_\phi)^2/\zeta^-)}{1+e^{2\gamma\tau_*}(\zeta^{-}_\phi)^2/(\zeta^{-})^4}\\
  & \quad + \left(\frac{n-1}{\phi_*}-\frac{\phi_*}{2}\right)\left(\zeta^+_\phi - \zeta^-_\phi\right) + (\gamma-1/2)(\zeta^{+} - \zeta^-) \\
  &\quad - \mu \tau_* (\zeta^{+}-\zeta^{-})\\
  & \geq (\zeta^+ - \zeta^-) \left\{ \left. \frac{2(\zeta^+_\phi)^2}{\zeta^+\zeta^-(1+e^{2\gamma\tau}(\zeta^{+}_\phi)^2/(\zeta^{+})^4)}\right\vert_{(\phi_*,\tau_*)} + (\gamma-1/2) - \mu \tau_*  \right\}\\
  & = -\epsilon e^{\mu \tau_*} \left\{ \text{bounded term} - \mu \tau_* \right\}.
\end{align*}
We recall that $\epsilon>0$ is fixed. If we choose $\mu$ sufficiently large, then  at $(\phi_*,\tau_*)$ we have 
\begin{align*}
 0 & \geq \p_\tau\vert_{\phi} v > 0,
\end{align*}
which is a contradiction. Hence, the claim is true. Since $\epsilon>0$ is arbitrary, the proposition follows.
\end{proof}

\begin{remark}
Despite the fact that the piecewise smooth upper barrier $\lambda^{+}$ and the piecewise smooth lower barrier $\lambda^{-}$ defined by \eqref{eq:upperbarrier} and \eqref{eq:lowerbarrier}, respectively, are not smooth, 
the comparison principle (Proposition \ref{comparison}) applies to them. This is because, by construction, the non-smooth corners of  $\lambda^{+}$ are convex and the non-smooth corners of $\lambda^{-}$ are concave. Hence, the points of first contact of either $\lambda^{+}$ or $\lambda^{-}$ with a given smooth MCF solution $\lambda$ are necessarily away from the corners; they occur at smooth points of $\lambda^{+}$ or $\lambda^{-}$.
\end{remark}

We end this section by discussing the relation between the barriers $\lambda^+$ and $\lambda^-$ and a formal solution $\lambda_{form}$. Suppose that $c\in(c^+, c^-)$, and let $A=c(2n-2)^{\gamma-1/2}$. It then follows from equations \eqref{eq:Acplus} and \eqref{eq:Acminus} that we have $A\in (A^+, A^-)$. Now consider the following formal solutions defined in the interior and exterior regions, respectively, for all $\tau\geq\tau_5$:
\begin{align*}
 \lambda_{form, int}(z,\tau) & = -A + e^{-2\gamma\tau}F(z),   \quad\quad\quad |z|\in[0, R^*];\\
  \lambda_{form, ext}(\phi,\tau) & = -c (2n-2-\phi^2)^{\gamma-1/2}, \quad |\phi|\in[R_*e^{-\gamma\tau},\sqrt{2(n-1)}).
\end{align*}
We see that (cf. the proof of Lemmata \ref{int-order} and \ref{ext-order}) for all $\tau\geq\tau_5$,
\begin{align*}
 \lambda^{-}_{int} & < \lambda_{form, int} < \lambda^{+}_{int}, \quad |z|\in[0, R^*];\\
 \lambda^{-}_{ext} & < \lambda_{form, ext} < \lambda^{+}_{ext}, \quad |\phi|\in[R_*e^{-\gamma\tau},\sqrt{2(n-1)}).
\end{align*}

%%%%%%%%%%%%%%%%%%%%%%%%%%%%%%
%%%%%%%%%%%%%%%%%%%%%%%%%%%%%%

\section{Proof of the main theorem}\label{existence}

We first show that the highest curvature for a convex rotationally symmetric solution of MCF is always achieved at the tip.\footnote{The proof of the lemma is contained in \cite[Lemma 3.1]{ADS15}. We include it here for completeness.}
\begin{lemma}\label{tip}
For a convex rotationally symmetric solution $\Gamma_t$, $t\in(0,T)$, of mean curvature flow, the maximum curvature
$\sup\limits_{p\in M_t}|h(p,t)|$ (and hence $\sup\limits_{p\in M_t}H(p, t)$) occurs at the tip of the surface $\Gamma_t$ for each $t\in(0,T)$
\end{lemma}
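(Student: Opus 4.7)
The plan is to apply the parabolic maximum principle to the arclength derivative of the mean curvature along the profile curve of $\Gamma_t$, showing that $H$ is monotonically decreasing from the tip toward the cylindrical end; together with umbilicity of the tip, this forces the spatial maximum of $|h|$ to occur at the tip.

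First I would parametrize the profile curve by arclength $s \in [0,\infty)$ starting from the tip, letting $\theta(s,t) \in (0,\pi/2]$ denote the angle between the unit tangent and the positive $x$-axis and $r(s,t)$ the radial coordinate. Rotational symmetry and convexity give the two distinct principal curvatures $\kappa_1 = -\theta_s$ (axial, multiplicity one) and $\kappa_2 = \cos\theta/r$ (rotational, multiplicity $n-1$); a short computation using $r_s = \sin\theta$ and $\theta_s = -\kappa_1$ yields the useful identity
\[
(\kappa_2)_s \;=\; \frac{\sin\theta}{r}\,(\kappa_1 - \kappa_2),
\]
and an L'Hopital argument at $s=0$ gives $\kappa_1(0,t) = \kappa_2(0,t)$, so the tip is umbilic.

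Starting from Huisken's evolution $\partial_t H = \Delta_{\Gamma_t} H + |h|^2 H$ together with the radial Laplacian $\Delta f = f_{ss} + (n-1)(\sin\theta/r)f_s$, I would differentiate once in $s$ to obtain a linear parabolic equation
\[
q_t \;=\; q_{ss} + a(s,t)\,q_s + b(s,t)\,q
\]
for $q := H_s$, with smooth coefficients on $(0,\infty)\times(0,T)$ and a removable coordinate singularity at $s=0$. Rotational symmetry (evenness of $H$ under reflection through the tip) forces $q(0,t) = 0$. At the cylindrical end, the exterior barriers $\lambda^{\pm}_{ext}$ established in Lemma \ref{exterior-supersub} guarantee that $H(s,t) \to (n-1)/r_0(t)$ smoothly and uniformly on bounded time intervals, so $q(s,t)\to 0$ as $s\to\infty$. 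Provided $q(\cdot,t_0) \le 0$ initially (a property built into the class $\mathscr{G}$ and readable off the initial barriers), the parabolic maximum principle applied on truncated cylinders $[0,S]\times[t_0,t]$ and then passed to the limit $S\to\infty$ yields $q \le 0$ throughout the lifespan, i.e., $H$ is spatially non-increasing in $s$.

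Monotonicity places the spatial maximum of $H$ at the tip, so $H(s,t) \le H(0,t) = n\,\kappa_1(0,t)$. To upgrade this to $|h|^2 = \kappa_1^2 + (n-1)\kappa_2^2$, I would apply the same strategy to $\kappa_2$: any interior maximum of $\kappa_2$ at some $s_*>0$ would, via the identity above, force $\kappa_1(s_*) = \kappa_2(s_*)$, which combined with the tensor Huisken evolution for $h$ and a second application of the maximum principle on $(\kappa_2)_s$ yields a contradiction; a parallel argument controls $\kappa_1$. Since the tip is umbilic and both principal curvatures are spatially maximized at $s = 0$, $|h|$ is maximized there as well. The main obstacle is the non-compactness at the cylindrical end: the standard maximum principle requires compactness or decay at infinity, and this is handled by truncation combined with the uniform asymptotic control from the exterior barriers $\lambda^{\pm}_{ext}$ of Lemma \ref{exterior-supersub}, ensuring that the contribution from the truncation boundary vanishes in the limit. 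A secondary subtlety is verifying the initial monotonicity $H_s(\cdot,t_0) \le 0$ from the explicit form of the initial data in $\mathscr{G}$, which follows from the construction in Section \ref{barriers} but requires unpacking the barrier asymptotics.
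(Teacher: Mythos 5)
The paper's proof and yours take genuinely different routes, and yours has gaps that the paper's approach deliberately sidesteps. The paper works with the curvature ratio $R := \kappa_{\mathrm{axial}}/\kappa_{\mathrm{rot}} = -uu_{xx}/(1+u_x^2)$, derives a reaction--diffusion equation for $R$ whose reaction term vanishes at $R=1$, applies the maximum principle with the natural boundary data $R=1$ at the umbilic tip and $R\to 0$ at the cylindrical end to conclude $R\le 1$, and then observes that $R\le 1$ is equivalent to $1+u_x^2+uu_{xx}\ge 0$, which is exactly what makes the rotational curvature $\kappa_{\mathrm{rot}}$ monotone in $x$. Umbilicity at the tip and $\kappa_{\mathrm{axial}}\le\kappa_{\mathrm{rot}}\le\kappa_{\mathrm{rot}}(\mathrm{tip})=\kappa_{\mathrm{axial}}(\mathrm{tip})$ then bound all principal curvatures simultaneously. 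This is a scalar maximum-principle argument for a single pinching quantity; no auxiliary monotonicity of $H$ or of the individual principal curvatures needs to be assumed at $t=t_0$.

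Your proof, by contrast, tries to propagate $H_s\le 0$ forward in time. Two concrete problems. First, the equation for $q:=H_s$ that you write down, $q_t=q_{ss}+a\,q_s+b\,q$, is not correct: differentiating Huisken's equation in $s$ produces
\[
\partial_s\bigl(|h|^2 H\bigr)=\Bigl(2\kappa_1 q-2(n-1)\tfrac{\sin\theta}{r}(\kappa_1-\kappa_2)^2\Bigr)H+|h|^2 q,
\]
so there is an inhomogeneous source term $-2(n-1)\frac{\sin\theta}{r}(\kappa_1-\kappa_2)^2 H$ that is not proportional to $q$. It happens to be $\le 0$, so the sign works in your favor, but the argument as written treats the equation as homogeneous and does not notice or exploit this. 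Second, and more seriously, your argument only shows that $H_s\le 0$ is \emph{preserved}; it requires the hypothesis $H_s(\cdot,t_0)\le 0$, which is an additional structural assumption not present in Lemma \ref{tip}. The lemma is stated for an arbitrary convex rotationally symmetric MCF solution asymptotic to a cylinder; the paper's proof needs only the boundary conditions on $R$ and convexity, not initial spatial monotonicity of $H$. Your appeal to the barriers $\lambda^\pm_{\mathrm{ext}}$ of Lemma \ref{exterior-supersub} also needlessly restricts the argument to the class $\mathscr{G}$, whereas the lemma is used as a general a priori fact. Finally, the passage from ``$H$ is maximized at the tip'' to ``$|h|$ is maximized at the tip'' is sketched but not actually carried out: interior extrema of $\kappa_2$ forcing $\kappa_1=\kappa_2$ via Codazzi is correct, but ``a second application of the maximum principle\ldots yields a contradiction'' is an assertion, not a proof, and this is precisely the step where the paper's pinching ratio $R\le 1$ does the work cleanly.
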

\begin{proof}
This fact is a consequence of the convexity of the graph, which is preserved under MCF. The principal curvatures of the rotationally symmetric graph are
\begin{align*}
\kappa_1 & = \cdots = \kappa_{n-1} = \frac{1}{u (1+u_x^2)^{1/2}},\quad \kappa_n  = -\frac{u_{xx}}{(1+u_x^2)^{3/2}}.
\end{align*}
From these formulas, we see that if we define the quantity  $R  := \frac{\kappa_n}{\kappa_1}$, we have 
\begin{align*}
 R & = - \frac{u u_{xx}}{(1+u_x^2)} = (1-n) - u u_t,
\end{align*}
where the last equality follows from the MCF equation \eqref{eq:u(x,t)}. Note that $R\geq 0$ because $u_{xx}(\cdot,t)\leq 0$ as a consequence of the presumed convexity of the graph. One computes that
\begin{align*}
 R_t & = \frac{R_{xx}}{1+u_x^2}  - \frac{2u_x}{u(1+u_x^2)}(1-R)R_x + \frac{2u_x^2}{u^2(1+u_x^2)}\left[(1-R^2)-(n-2)(1-R)\right].
\end{align*}

For $t\in(0,T)$, $R=1$ at the tip since the tip is an umbilic point, and $\lim\limits_{x\nearrow\infty}R=0$ since the surface is asympotic to a shrinking cylinder. At a maximum of $R$ (which occurs at an interior point), $R_x=0$, $R_{xx}\leq 0$; hence 
\begin{align*}
 \p_t R_{\max} \leq \frac{2u_x^2}{u^2(1+u_x^2)}\left[(1-R_{\max}^2)-(n-2)(1-R_{\max})\right].
\end{align*}
It then follows from the maximum principle that $R\leq 1$. Note that $R\leq 1$ is equivalent to the inequality $1+u_x^2+u u_{xx} \geq 0$.

For $i=1,\ldots, n-1$, we have $\kappa_i^{-1} = u(1+u_x^2)^{1/2}$, which implies that
\begin{align*}
 \left(\kappa_i^{-1}\right)_x & = u_x(1+u_x^2)^{-1/2}(1+u_x^2+u u_{xx}) \geq 0
\end{align*}
since $u_x\geq 0$ for $x\geq x_0(t)$ (the position of the moving tip). So $\kappa_i$ has its maximum at the tip. Since $R=1$ at the tip and $R\leq 1$ everywhere, all the principal curvatures are maximal at the tip. Therefore, $\sup\limits_{p\in M_t}|h(p,t)| = \sup \sqrt{\lambda_1^2 + \cdots + \lambda_n^2}$ and $\sup\limits_{p\in M_t} H(p,t) = \sup (\lambda_1 + \cdots + \lambda_n)$ occur at the tip.
\end{proof}

We now prove the main theorem of the paper.

\begin{proof}[Proof of Theorem \ref{thmmain}] Let $n\geq 2$ and $\gamma>1/2$. Let $\tau_0\geq \tau_5$, where $\tau_5$ is given in Lemma \ref{patch}.

We first construct initial data for MCF flow by patching formal solutions in the interior and exterior regions at $\tau=\tau_0$. Pick $c\in(c^{+}, c^{-})$, and let $A = c(2n-2)^{\gamma-1/2}$. It follows that $A\in(A^{+}, A^{-})$. Recalling  that $z=\phi e^{-\gamma\tau}$, we define
\begin{align*}
\hat\lambda_0(\phi) := \left\{
\begin{array}{lr}
\begin{array}{l} -A + e^{-2\gamma\tau_0}F(z)- e^{-2\gamma\tau_0}F(R_*) \\
 \quad + \left[ A -c\left(2n-2-(R_*e^{-\gamma\tau_0})^2\right)^{\gamma-1/2} \right] 
 \end{array}, & 0\leq|z|\leq R_*,\\ \\
-c(2n-2-\phi^2)^{\gamma-1/2}, & R_*e^{-\gamma\tau_0}\leq|\phi|<\sqrt{2(n-1)},
\end{array}
\right.
\end{align*}
where we recall that $R_*$ is defined in Lemma \ref{patch}. In particular, by taking $\tau_0$ sufficiently large, we can ensure
\begin{align*}
 \left\vert - e^{-2\gamma\tau_0}F(R_*) + A-c\left(2n-2-(R_*e^{-\gamma\tau_0})^2\right)^{\gamma-1/2}\right\vert < \min\left\{\frac{A^{-} - A}{100},\frac{A - A^{+}}{100}\right\},
\end{align*}
so then at $\tau=\tau_0$ and for all $|\phi|\in[0,\sqrt{2(n-1)})$, we have
\begin{align*}
 \lambda^{-}(\phi,\tau_0) < \hat\lambda_0(\phi) < \lambda^{+}(\phi,\tau_0).
\end{align*}
It follows from its construction that $\hat\lambda_0$ is continuous and piecewise smooth, and that \\ $\lim\limits_{|\phi|\nearrow \sqrt{2(n-1)}} \hat\lambda_0 = 0$. Moreover, $\hat\lambda$ is convex since $F(z)$ and $-c (2n-2-\phi^2)^{\gamma-1/2}$ are convex. Consequently  we can smooth $\hat\lambda_0$ to obtain an open set of smooth convex functions such that each such function $\lambda_0$ has the properties that $\lambda^{-} < \lambda_0 < \lambda^{+}$ for $|\phi|\in[0,\sqrt{2(n-1)})$ at $\tau=\tau_0$, and that $\lim\limits_{|\phi|\nearrow \sqrt{2(n-1)}} \lambda_0 = 0$. Then this open set of functions $\lambda_0$ corresponds to an open set $\mathscr{G}$ of smooth, complete, rotationally symmetric, convex, asymptotically cylindrical hypersurfaces over a ball.

MCF starting from a hypersurface $\Gamma\in\mathscr{G}$ must have a smooth solution up to its vanishing time $T$. Let $\lambda(\phi,\tau)$ correspond to such a MCF solution. Since $\lambda^{-} < \lambda < \lambda^{+}$ on $(-\sqrt{2(n-1)},\sqrt{2(n-1)})$ at $\tau=\tau_0$, the comparison principle (Proposition \ref{comparison}) implies that the solution is always trapped between the barriers; that is,  $\lambda^{-} < \lambda < \lambda^{+}$ on $(-\sqrt{2(n-1)},\sqrt{2(n-1)})\times (\tau_0,\infty)$. In particular, the asymptotics of $\lambda^{-}$ and $\lambda^{+}$ as \\ $\phi\nearrow\sqrt{2(n-1)}$ imply that for a fixed choice of $\gamma>1/2$, $\lambda(\phi,\tau) \sim (2n-2-\phi^2)^{\gamma-1/2}$ as $|\phi|\nearrow\sqrt{2(n-1)}$ for all $\tau\geq\tau_0$.  This  implies part (3) of Theorem \ref{thmmain}.

\begin{remark}
For the class of MCF solutions we consider, the asymptotic cylindrical condition is given by a precise rate and this rate is preserved under MCF.
\end{remark}

To study the behavior of such a MCF solution near the tip as $\tau\nearrow\infty$, we work with $y(z,\tau)$ instead of $\lambda(z,\tau)$. Recall that $y(z,\tau)$ evolves by equation \eqref{eq:y(z,tau)}. Let $\tilde A = -1/A$. Define $\tilde p(z,\tau)$ by the relation
\begin{align}
 y(\phi,\tau) = \tilde A + e^{-2\gamma\tau} \tilde p(z,\tau).
\end{align}
Then $\tilde p(z,\tau)$ satisfies the PDE $\mathcal{B}[\tilde p] = 0$ where
\begin{align}
 \mathcal{B}[\tilde p] & = (\gamma-1/2)\tilde A - \left(\frac{\tilde{p}_{zz}}{1+\tilde{p}^2_z} + \frac{n-1}{z}\tilde p_{z}\right) + e^{-2\gamma\tau}\left\{ \left.\p_\tau\right\vert_z \tilde p + (\gamma+1/2)(z\tilde p - \tilde p) \right\}.
\end{align}
Part (2) of Theorem \ref{thmmain} is a consequence of the following lemma:
\begin{lemma}[Type-II blow-up]\label{lem:type2}
Recall the function $\tilde P$ defined in \eqref{eq:tildeP} which forms part of a formal solution to MCF. We have the following asymptotic behavior of $\tilde p$:
\begin{align}
\lim\limits_{\tau\nearrow\infty} \left( \tilde p(z,\tau) - \tilde p(0,\tau) \right) = \frac{1}{(\gamma-1/2)\tilde A} \tilde P\left( (\gamma-1/2)\tilde A z \right)
\end{align}
uniformly on compact $z$ intervals.
\end{lemma}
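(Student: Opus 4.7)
The plan is to work with the recentered profile $\tilde q(z,\tau) := \tilde p(z,\tau) - \tilde p(0,\tau)$, which by the rotational symmetry of the flow satisfies $\tilde q(0,\tau)=\tilde q_z(0,\tau)=0$. The goal is to show $\tilde q(\cdot,\tau)\to\tilde F(\cdot):=\frac{1}{(\gamma-1/2)\tilde A}\tilde P((\gamma-1/2)\tilde A\,\cdot)$ in $C^0_{\mathrm{loc}}$. The recentering is essential because the interior barriers only sandwich $\tilde p$ up to a $z$-independent, $\tau$-growing shift (coming from the $B^\pm\tau$ term in $\lambda^\pm_{int}$) that need not converge.

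First, evaluating $\mathcal{B}[\tilde p]=0$ at $z=0$ (using $\tilde p_z(0,\tau)=0$ and $\lim_{z\to 0}(n-1)\tilde p_z/z=(n-1)\tilde p_{zz}(0,\tau)$) yields the identity
\begin{equation*}
e^{-2\gamma\tau}\bigl[\partial_\tau \tilde p(0,\tau)-(\gamma+\tfrac12)\tilde p(0,\tau)\bigr]=n\tilde p_{zz}(0,\tau)-(\gamma-\tfrac12)\tilde A.
\end{equation*}
Substituting this back into the $\tilde q$-form of $\mathcal{B}[\tilde p]=0$ cancels the potentially exponentially growing tip contributions and produces the workable equation
\begin{equation*}
\frac{\tilde q_{zz}}{1+\tilde q_z^2}+(n-1)\frac{\tilde q_z}{z}=n\tilde p_{zz}(0,\tau)+e^{-2\gamma\tau}\bigl\{\partial_\tau \tilde q+(\gamma+\tfrac12)(z\tilde q_z-\tilde q)\bigr\}.
\end{equation*}

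Second, I would derive uniform $C^k_{\mathrm{loc}}$ bounds in space-time for $\tilde q$. The barriers of Proposition~\ref{prop-patch}, translated via $y=-1/\lambda$ and $\tilde p=e^{2\gamma\tau}(y-\tilde A)$, sandwich $\tilde p$ on compact $z$-intervals up to a $z$-independent drift that cancels in $\tilde q$; standard quasilinear parabolic regularity (Krylov--Safonov combined with interior Schauder estimates) then boosts the resulting $C^0$ bounds to uniform $C^k_{\mathrm{loc}}$ bounds. Passing to a subsequence $\tau_j\to\infty$ and shifting parabolically, one extracts a $C^\infty_{\mathrm{loc}}$ limit $q_\infty(z,s)$ on $\mathbb{R}\times[-1,1]$. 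Because the $e^{-2\gamma\tau}$-coefficient terms vanish in the limit, $q_\infty(\cdot,s)$ satisfies
\begin{equation*}
\frac{q_\infty''}{1+(q_\infty')^2}+(n-1)\frac{q_\infty'}{z}=c,\qquad q_\infty(0)=q_\infty'(0)=0,
\end{equation*}
for each fixed $s$, with $c:=n q_{\infty,zz}(0)$. Since this ODE is $s$-independent, $q_\infty$ depends only on $z$ and equals $F_c$, the unique even solution.

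The main obstacle is identifying $c=(\gamma-\tfrac12)\tilde A$; once done, $F_c=\tilde F$ by uniqueness of the IVP \eqref{eq:tildeP}, and the subsequence-independence of the limit yields the full convergence claimed. To pin down $c$, I would use a matching argument against the exterior barriers: in the intermediate range where $z\to\infty$ but $\phi=ze^{-\gamma\tau}\to 0$, Taylor-expanding $\lambda^\pm_{ext}=-c^\pm(2n-2-\phi^2)^{\gamma-1/2}+O(e^{-2\gamma\tau})$ in $\phi$ and converting to $\tilde p$ gives $\tilde q(z,\tau)\sim (\gamma-\tfrac12)z^2/(2(n-1)A^\pm)$ to leading order; comparing this with the standard ODE asymptotic $F_c(z)\sim cz^2/(2(n-1))$ (cf.\ \eqref{eq:asymp-F}) and using that the openness of $\mathcal{G}$ allows the constants $A^\pm$ to be taken arbitrarily close to $1/\tilde A$ forces $c=(\gamma-\tfrac12)\tilde A$.
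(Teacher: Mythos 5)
Your proposal takes a genuinely different route from the paper. The paper's proof is short and essentially a reduction-and-citation: after the change of variable $s=e^{2\gamma\tau}/(2\gamma)$ it writes down the PDEs \eqref{eq:p_s} for $\tilde p$ and \eqref{eq:P[q]} for $q:=\tilde p_z$, observes these are (after matching coefficients) precisely equations (7.13)--(7.14) of Angenent--Vel\'{a}zquez \cite{AV97}, and then invokes \cite[pp.51--58]{AV97} \emph{mutatis mutandis}. The central analytic object there is the \emph{derivative} $q=\tilde p_z$, which automatically kills the additive tip drift, and the compactness input comes from the convexity of the evolving graph (not from the barriers). Your proposal instead recenters $\tilde q=\tilde p-\tilde p(0,\tau)$, derives the (correct) recentered equation, and then tries to extract a limit via barrier-supplied $C^0_{\mathrm{loc}}$ bounds and parabolic regularity, identifying the limiting constant via an exterior matching argument.

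There is, however, a concrete gap in the compactness step. The interior barriers of Lemma \ref{interior-supersub} have fixed leading constants $-A^{\pm}$ with $A^{+}<A<A^{-}$, so $\lambda^{+}_{int}-\lambda^{-}_{int}\to A^{-}-A^{+}>0$ uniformly on compact $z$-intervals, hence $y^{+}-y^{-}\to\dfrac{A^{-}-A^{+}}{A^{+}A^{-}}>0$ and therefore $\tilde p^{+}-\tilde p^{-}=e^{2\gamma\tau}(y^{+}-y^{-})$ grows like $e^{2\gamma\tau}$. Thus $\tilde p^{-}\le\tilde p\le\tilde p^{+}$ does \emph{not} sandwich $\tilde p$ up to a $z$-independent drift that is common to both sides; the two barriers have \emph{different} drifts (driven by the different $A^{\pm}$), differing by an exponentially large amount. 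Consequently the barrier inequalities alone give $\tilde q(z,\tau)$ only to within $O\bigl(e^{2\gamma\tau}\bigr)$, and your claimed uniform $C^0_{\mathrm{loc}}$ bound on $\tilde q$ does not follow. (This is exactly why the paper and \cite{AV97} work with the gradient $q=\tilde p_z$ and obtain bounds from convexity of the graph rather than from barrier sandwiching.) The final matching step is also problematic: the barrier construction fixes $A^{\pm}$ before the open set $\mathscr{G}$ is produced, so for a \emph{given} solution trapped between those barriers one cannot later take $A^{\pm}$ arbitrarily close to $1/\tilde A$; the barriers only localize the tip value to an interval $(1/A^{-},1/A^{+})$ and do not pin it down, so the identification $c=(\gamma-\tfrac12)\tilde A$ still needs an argument of the AV-type, not a barrier argument.
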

\begin{proof}[Proof of Lemma \ref{lem:type2}]

We show that $\tilde p(z,\tau)$ converges uniformly to $\tilde{P}\left((\gamma-1/2)\tilde A z \right)$ as $\tau\to\infty$ for bounded $z\geq 0$. To do this, it is useful to define a new ``time'' variable $s=e^{2\gamma\tau}/(2\gamma)$. In terms $s$,  $\tilde p$ satisfies the PDE
\begin{align}\label{eq:p_s}
 \left. \p_s \right\vert_z \tilde p & = \frac{\tilde{p}_{zz}}{1+\tilde{p}^2_z} + \frac{n-1}{z}\tilde p_{z} - (\gamma-1/2)\tilde A + \frac{2\gamma+1}{4\gamma} \frac{1}{s} (\tilde p - z\tilde p).
\end{align}
The quantity $q(z,s):=\tilde{p}_z(z,\tau)$ satisfies $\mathcal{P}[q]=0$, where
\begin{align}\label{eq:P[q]}
 \mathcal{P}[q] & = \frac{\p q}{\p s} + \frac{2\gamma+1}{4\gamma} \frac{1}{s} z q_z - \frac{\p}{\p z}\left( \frac{q_z}{1+q^2} + \frac{n-1}{z} q \right).
\end{align} 
We note that equations \eqref{eq:p_s} and \eqref{eq:P[q]} are the same as equations (7.13) and (7.14) in \cite{AV97}. Indeed, we see that the coefficient $\frac{2\gamma+1}{4\gamma}$ here is replaced by $\frac{m-1}{m-2}$ in \cite{AV97}. However, we recall that in \cite{AV97}, $\gamma=\frac{1}{2}-\frac{1}{m}$, so $\frac{2\gamma+1}{4\gamma} = \frac{m-1}{m-2}$. Therefore, the rest of the proof in \cite[pp.51--58]{AV97} applies to our case \emph{mutatis mutandis}.
\end{proof}

Lemma \ref{lem:type2} implies that a smooth convex MCF solution expressed in $y(z,\tau)$ satisfies the following asymptotics: on a compact $z$ interval (in the interior region), as $\tau\nearrow\infty$, 
\begin{align*}
 y (z,\tau) & = \tilde A - e^{-\gamma\tau} \tilde{p}(0,\tau) + e^{-2\gamma\tau} \frac{1}{(\gamma-1/2)\tilde A} \tilde P\left( (\gamma-1/2)\tilde A z \right)\\
 & = y(0,\tau) + e^{-2\gamma\tau} \frac{1}{(\gamma-1/2)\tilde A} \tilde P\left( (\gamma-1/2)\tilde A z \right).
\end{align*}
The highest curvature of this MCF solution occurs at the tip where $z=0$ (cf. Lemma \ref{tip}) and necessarily blows up at the rate predicted by the formal solution $e^{-2\gamma\tau} \frac{\tilde P\left( (\gamma-1/2)\tilde A z \right)}{(\gamma-1/2)\tilde A}$ (cf. Section \ref{formal}), which, after being translated back in the $(x,t)$-coordinates (cf. \eqref{eq:meancurv}), proves part (1) of Theorem \ref{thmmain}.

Therefore, Theorem \ref{thmmain} is proved.
\end{proof}

\subsection*{Acknowledgments}
We are grateful to Sigurd Angenent and Dan Knopf for their interest in and helpful discussions on this project. We also thank the referee for valuable comments on our manuscript.

\bibliography{mcf-type2_bib}
\bibliographystyle{crelle}

%\begin{thebibliography}{9}
%\end{thebibliography}
\end{document}